\theoremstyle{plain}
\newtheorem{theorem}{Theorem}[section]
\newtheorem{lemma}[theorem]{Lemma}
\newtheorem{proposition}[theorem]{Proposition}
\newtheorem{corollary}[theorem]{Corollary}
\newtheorem{conjecture}[theorem]{Conjecture}
\theoremstyle{definition}
\newtheorem{remark}[theorem]{Remark}
\newtheorem{example}[theorem]{Example}
\newcommand{\R}{\mathbb{R}}
\newcommand{\C}{\mathbb{C}}
\newcommand{\UN}{{\bf 1}}
\definecolor{darkblue}{rgb}{0,0,0.7} % darkblue color
\newcommand{\darkblue}{\color{darkblue}} % darkblue command
\newcommand{\defn}[1]{\emph{\darkblue #1}} % emphasis of a definition
\title[A Perron theorem with applications]{A Perron theorem for matrices with negative entries and applications to
Coxeter groups}
\author[J.-P.~Labb\'e]{Jean-Philippe Labb\'e}
\address[J.-P. Labb\'e]{Einstein Institute of Mathematics, Hebrew University of Jerusalem, Jerusalem 91904, Israel}
\email{labbe@math.huji.ac.il}
\urladdr{http://www.math.huji.ac.il/~labbe}
\author[S.~Labb\'e]{S\'ebastien Labb\'e}
\address[S.~Labb\'e]{
B\^at. B37 Institut de Math\'ematiques,
Grande Traverse 12,
4000 Li\`ege,
Belgium}
\email{slabbe@ulg.ac.be}
\urladdr{http://www.slabbe.org/}
\dedicatory{to Robert Labb\'e, in memoriam}
\keywords{Perron Theorem \and Primitive matrices \and Positive matrices \and Coxeter groups}
\subjclass[2010]{Primary 15B48; Secondary 37B05 \and 20F55}
\begin{document}

\begin{abstract}
Handelman (J. Operator Theory, 1981) proved that if the spectral radius of a
matrix~$A$ is a simple root of the characteristic polynomial and is strictly
greater than the modulus of any other root, then $A$ is conjugate to a matrix
$Z$ some power of which is positive. In this article, we provide an explicit
conjugate matrix $Z$, and prove that the spectral radius of $A$ is a simple
and dominant eigenvalue of $A$ if and only if $Z$ is eventually positive.  For
$n\times n$ real matrices with each row-sum equal to $1$, this criterion can
be declined into checking that each entry of some power is strictly larger
than the average of the entries of the same column minus~$\frac{1}{n}$.  We
apply the criterion to elements of irreducible infinite nonaffine Coxeter
groups to provide evidences for the dominance of the spectral radius, which is
still unknown.
\end{abstract}

\maketitle

\section{Introduction}

A \emph{primitive} matrix is a real nonnegative square matrix some power of which is
positive.
The \emph{spectral radius} of a real square matrix is the maximal modulus of its eigenvalues.
Perron's theorem says that the spectral radius of a primitive matrix is a root
of the characteristic polynomial (Eigenvalue) with algebraic multiplicity
one (Simplicity) which is strictly greater than the modulus of any other root
(Dominance) and has positive eigenvectors (Positivity).
There are also matrices that are not primitive satisfying the four conclusions
of Perron's theorem. For example,
\[
    A = 
\left(\begin{array}{rr}
	11 & 29 \\
	14 & -1
\end{array}\right)
\quad
\text{is not primitive, while}
\quad
A ^2 =
\left(\begin{array}{rr}
	527 & 290 \\
	140 & 407
\end{array}\right)
\quad
\text{is positive;}
\]
from which we deduce that $A$ shares also the four conclusions of Perron's theorem.
In \cite{MR2117663,MR2182957}, they characterize exactly the matrices that
satisfy the four conclusions of Perron's theorem. These are called the
\emph{eventually positive} matrices, which may have negative entries.

Characterizations of matrices satisfying Perron's theorem after droping one or more
of the four conclusions already received attention. For instance, in \cite{MR1061544}, 
the authors considered the question of finding a converse to Perron's theorem
while only keeping the Eigenvalue conclusion. Their result is expressed in terms of
sign-patterns.
They managed to characteristize the sign-patterns that \emph{require}
the Eigenvalue condition, i.e., such that every matrix with that sign-pattern has
its spectral radius among its eigenvalues. 
Nonetheless, they noticed that many
sign-pattern only \emph{allow} the Eigenvalue condition, i.e., some matrices with
that sign-pattern have the Eigenvalue property while others do not. They were unable to 
characterize them all, leading to an imperfect characterization of matrices 
satisfying the Eigenvalue condition, see the survey \cite{MR2517861}.

Handelman \cite{MR637001} proved a converse to Perron's theorem while keeping
the first three conclusions. He proved that the spectral radius of a matrix
satisfies the Eigenvalue, Simplicity, and Dominance conditions if and only if it is conjugate to a
matrix some power of which is positive. This leads to the following question:
How should one find this conjugate matrix? In this article, we improve upon
Handelman's result by providing a conjugate matrix to check for eventual
positivity (Theorem~\ref{thm:simpledominantsto}). This provides a criterion
for deciding whether the spectral radius of a matrix is a simple root of its
characteristic polynomial dominating the modulus of any other root. 
For real $n\times n$ matrices for which each row-sum is 1 (stochastic matrices
allowing negative entries), this criterion is declined into checking that each
entry of some power is strictly larger than the average of the entries of the
same column minus~$\frac{1}{n}$ (see Corollary~\ref{cor:when1isrighteigv}).

The rest of this article is divided into four sections.
In Section~\ref{sec:known}, we review some results generalizing
Perron's theorem to matrices with negative entries.
In Section~\ref{sec:preliminary}, we present preliminary lemmas on the
multiplicity of eigenvalues allowing to put the main result in context.
In Section~\ref{sec:main}, we prove Theorem~\ref{thm:simpledominantsto} and
some corollaries giving a criterion for deciding whether the spectral radius of
a matrix has the Eigenvalue, the Simplicity and the Dominance properties.
In Section~\ref{sec:coxeter}, we apply these results to the theory of Coxeter groups. 
More precisely, we give many examples of elements with \emph{parabolic closure} (see
Section~\ref{ssec:cox_definition} for a definition) equal to an
irreducible infinite nonaffine Coxeter group having a positive conjugate matrix.

\section{Perron-type theorems for general matrices}\label{sec:known}

Let $n\geq 1$ and $A=(a_{ij})_{1\leq i,j \leq n}\in\R^{n\times n}$.
The matrix $A$ is called \defn{positive} if all its entries are positive, i.e.
if $a_{ij}>0$ for all $i$ and $j$ with $1\leq i,j \leq n$, and we denote it as $A>0$.
Similarly, a matrix is called \defn{nonnegative} if all its entries are
nonnegative.
A nonnegative matrix is called \defn{primitive} if some of its powers is positive.
The eigenvalue $\lambda$ of a matrix $A$ is called \defn{simple} if it is a
simple root of the characteristic
polynomial of $A$, i.e. the algebraic multiplicity of $\lambda$ is one.
We say that~$\lambda$ is \defn{dominant} if its modulus $|\lambda|$ is strictly
greater than the modulus of any other eigenvalue of $A$.
The \defn{spectral radius} of $A$ is the maximal modulus of the eigenvalues of $A$.
Therefore, a dominant eigenvalue of $A$, seen in the complex plane, lies on the circle whose
radius is the spectral radius of $A$.
The existence of a simple and dominant eigenvalue for primitive matrices is
given by Perron's theorem.
\begin{theorem}[\cite{MR1511438}]
If $A\in\R^{n\times n}$ is a primitive matrix with spectral radius
$\lambda$, then $\lambda$ is a dominant and simple eigenvalue of $A$ with positive eigenvectors.
\end{theorem}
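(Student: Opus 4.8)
The plan is to prove first the special case of a strictly positive matrix $A>0$, and then to lift the conclusion to an arbitrary primitive matrix through a power $A^m>0$.

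For the positive case, I would first produce a positive eigenvector by a fixed-point argument. The map $x\mapsto Ax/\|Ax\|_1$ sends the standard simplex into its relative interior (because $A>0$ forces $Ax>0$ whenever $x\geq 0$, $x\neq 0$), so Brouwer's theorem yields $v\geq 0$, $v\neq 0$, with $Av=\lambda v$ and $\lambda=\|Av\|_1>0$; applying $A$ once more gives $v=Av/\lambda>0$. Running the same argument on $A^{\mathsf T}$ produces a positive left eigenvector $u$ with $u^{\mathsf T}A=\lambda'u^{\mathsf T}$, and the pairing $u^{\mathsf T}Av$ shows $\lambda'=\lambda$. To see that $\lambda$ is the spectral radius, I would take any eigenpair $Ax=\mu x$ and apply the triangle inequality entrywise to get $|\mu|\,|x|\leq A|x|$; pairing with $u>0$ gives $|\mu|\,u^{\mathsf T}|x|\leq\lambda\,u^{\mathsf T}|x|$, hence $|\mu|\leq\lambda$.

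The two delicate conclusions in the positive case are Dominance and Simplicity. For Dominance, I would start from an eigenvalue $\mu$ with $|\mu|=\lambda$: the inequality $|\mu|\,|x|\leq A|x|$ together with $u>0$ forces the equality $A|x|=\lambda|x|$, so $|x|$ is a positive Perron vector; then equality in the triangle inequality $\bigl|\sum_j a_{ij}x_j\bigr|=\sum_j a_{ij}|x_j|$ with all $a_{ij}>0$ forces the entries of $x$ to share a common phase, whence $x$ is a scalar multiple of $|x|$ and $\mu=\lambda$. For Simplicity, I would first note geometric multiplicity one (otherwise a real eigenvector $w$ not proportional to $v$ could be combined as $v-tw$ to produce a nonnegative nonzero eigenvector with a zero entry, contradicting $A(v-tw)>0$), and then rule out a Jordan block: if $(A-\lambda I)z=cv$ with $c\neq 0$, applying the positive left eigenvector gives $0=u^{\mathsf T}(A-\lambda I)z=c\,u^{\mathsf T}v\neq 0$, a contradiction. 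This pins the algebraic multiplicity of $\lambda$ to one.

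Finally I would lift to the primitive case. Writing $A^m>0$, the positive case applies to $A^m$, giving a simple dominant eigenvalue $\lambda^m$ with positive eigenvector $v$. Since $A\geq 0$ and $A^m(Av)=\lambda^m(Av)$, the vector $Av$ lies in the one-dimensional $\lambda^m$-eigenspace of $A^m$, so $Av=\beta v$ for a scalar $\beta$; as $Av\geq 0$ and $v>0$ we have $\beta\geq 0$, and $\beta^m v=A^m v=\lambda^m v$ forces $\beta=\lambda$, so $v$ is a positive eigenvector of $A$ with eigenvalue the spectral radius $\lambda$. Any eigenvalue $\mu$ of $A$ with $|\mu|=\lambda$ satisfies $\mu^m=\lambda^m$ by dominance for $A^m$, and its eigenvector lies in $\mathrm{span}(v)$, forcing $\mu=\lambda$; moreover a would-be rank-two generalized eigenvector $z$ of $A$ for $\lambda$ would, via $A^m-\lambda^m I=(A-\lambda I)\sum_{j=0}^{m-1}\lambda^{m-1-j}A^j$, become a rank-two generalized eigenvector of $A^m$ for $\lambda^m$, contradicting its simplicity. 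Hence $\lambda$ is dominant and simple for $A$ as well. I expect the main obstacle to be the positive case, specifically the Simplicity argument ruling out a Jordan block, where the positive left eigenvector is the essential tool.
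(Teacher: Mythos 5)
The paper does not prove this statement at all: it is quoted as classical background with a citation to Perron's 1907 paper, and the rest of the article uses it as a black box (for instance in the implication (v)$\implies$(i) of Theorem~\ref{thm:simpledominantsto}). So there is no in-paper proof to compare against; judged on its own, your argument is correct and complete in its essentials. You follow the standard self-contained route: for $A>0$, Brouwer's fixed point theorem on the simplex produces a positive right eigenvector, the same argument on $A^\top$ gives a positive left eigenvector $u$, and pairing the entrywise inequality $|\mu|\,|x|\leq A|x|$ against $u>0$ identifies $\lambda$ with the spectral radius; dominance comes from the equality case (which forces $A|x|=\lambda|x|$ and a common phase for the entries of $x$, the positivity of all $a_{ij}$ being exactly what makes the phase argument work); simplicity is correctly split into geometric multiplicity one (the $v-tw$ argument) plus the exclusion of a length-two Jordan chain by hitting $(A-\lambda I)z=cv$ with $u^\top$; and the lift to a primitive $A$ with $A^m>0$ is sound, using one-dimensionality of the $\lambda^m$-eigenspace of $A^m$ to get $Av=\lambda v$ with $\lambda>0$, and the commuting factorization $A^m-\lambda^m I=(A-\lambda I)\sum_{j=0}^{m-1}\lambda^{m-1-j}A^j$ to transfer dominance and simplicity back to $A$. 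Two minor points worth making explicit: the statement asks for positive eigenvectors on both sides, so in the primitive case you should note that the left eigenvector is obtained by running your lifting argument on $A^\top$, which is primitive because $(A^\top)^m=(A^m)^\top>0$; and in the geometric-multiplicity step you should say that $w$ may be taken real (the eigenspace of the real eigenvalue $\lambda$ is defined over $\R$) and, after replacing $w$ by $-w$ if needed, has a positive entry, so that $t=\min\{v_i/w_i:\ w_i>0\}$ produces the nonnegative, nonzero eigenvector with a vanishing entry that contradicts $A(v-tw)>0$.
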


One can say more, namely that the only positive eigenvectors are those
associated with the eigenvalue $\lambda$ and this fact is used in the
proof of Theorem~\ref{thm:simpledominantsto}.
The exact reciprocal of Perron's theorem includes eventually positive matrices:
A matrix $A$ is \defn{eventually positive} if there is a positive integer $K$
such that $A^k > 0$, for all $k > K$. As opposed to primitive matrices that
integer~$K$ may be arbitrarily large.

\begin{theorem}[{\rm\cite[Thm.~1]{MR2117663},\cite[Thm.~2.2]{MR2182957}}]
Let $A\in\R^{n\times n}$. The following statements are equivalent.
\begin{enumerate}[\rm (i)]
\item $A$ is eventually positive.
\item $A$ has a positive, simple and dominant eigenvalue and
associated positive left and right eigenvectors.
\item there is a positive integer $k$ such that $A^k > 0$ and $A^{k+1} > 0$.
\end{enumerate}
\end{theorem}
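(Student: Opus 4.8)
The plan is to establish the cycle of implications $(i) \Rightarrow (iii) \Rightarrow (ii) \Rightarrow (i)$. The implication $(i) \Rightarrow (iii)$ is immediate: if $A^m > 0$ for all $m > K$, then choosing any $k > K$ yields $A^k > 0$ and $A^{k+1} > 0$ simultaneously.

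For $(iii) \Rightarrow (ii)$, the idea is to transport the Perron data of the positive matrix $A^k$ back to $A$. Since $A^k > 0$ is positive it is primitive, so by Perron's theorem its spectral radius $\mu = \rho(A)^k$ is a simple and dominant eigenvalue of $A^k$ with a positive right eigenvector $v$ and a positive left eigenvector $u$. The one-dimensionality of the $\mu$-eigenspace of $A^k$ is the crucial leverage: from $A^k(Av) = A(A^k v) = \mu(Av)$ we see that $Av$ again lies on this line, so $Av = cv$ for some scalar $c$ with $c^k = \mu$; symmetrically $u^{T} A = c' u^{T}$ with $(c')^k = \mu$, and pairing through $u^{T} A v$ forces $c = c'$ because $u^{T} v > 0$. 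It then remains to pin down $c$. First, $|c|^k = \mu = \rho(A)^k$ gives $|c| = \rho(A)$. Next, to rule out a nonreal or negative value I would feed $v$ through the two positive powers directly: from $A^k v = c^k v > 0$ and $A^{k+1} v = c^{k+1} v > 0$ with $v > 0$, both $c^k$ and $c^{k+1}$ must be positive reals, whence $c = c^{k+1}/c^{k} > 0$ and so $c = \rho(A)$. Simplicity and dominance then descend from $A^k$ to $A$ via the map $\lambda \mapsto \lambda^k$: any eigenvalue $\lambda$ of $A$ with $\lambda^k = \mu$ satisfies $|\lambda| = \rho(A)$, so $\lambda^k$ sits on the spectral circle of $A^k$ and the dominance of $\mu$ forces $\lambda^k = \mu$; consequently $c$ is the unique eigenvalue of $A$ on its spectral circle (dominance) and the algebraic multiplicity of $\mu$ for $A^k$ equals that of $c$ for $A$, which is therefore $1$ (simplicity).

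For $(ii) \Rightarrow (i)$, I would use the power method. Write $\rho = \rho(A) > 0$ and set $B = \rho^{-1} A$, so that $B$ has simple dominant eigenvalue $1$ with the same positive eigenvectors $v$ and $u$. The spectral projection onto this eigenvalue is the rank-one matrix $L = (u^{T} v)^{-1} v u^{T}$, and since every other eigenvalue of $B$ has modulus strictly less than $1$, we have $B^m \to L$ entrywise as $m \to \infty$. As $v > 0$ and $u > 0$, the outer product $v u^{T}$ and the scalar $u^{T} v$ are positive, so $L > 0$ with entries bounded away from $0$; hence $B^m > 0$ for all large $m$, and therefore $A^m = \rho^{m} B^m > 0$, i.e. $A$ is eventually positive.

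I expect the heart of the argument — and the main obstacle — to be the descent step in $(iii) \Rightarrow (ii)$: extracting from the positivity of $A^k$ alone a genuinely positive, real, simple, and dominant eigenvalue of $A$, rather than merely some $k$-th root of $\mu$ lying somewhere on the spectral circle. The clean device that resolves it is pushing the positive eigenvector $v$ through two \emph{consecutive} positive powers, which rigidifies $c$ into a positive real; once $c = \rho(A)$ is secured, simplicity and dominance reduce to bookkeeping on how eigenvalues transform under $\lambda \mapsto \lambda^k$.
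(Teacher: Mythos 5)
The paper does not actually prove this theorem: it is quoted from the cited references \cite{MR2117663,MR2182957}, so there is no in-paper argument to compare against, and your proof must stand on its own. It essentially does. The implication (i)$\Rightarrow$(iii) is trivial as you say; (ii)$\Rightarrow$(i) is the standard power-method/spectral-projection argument, the same rank-one convergence the paper records as Lemma~\ref{lem:vuexpfast} and exploits inside Theorem~\ref{thm:simpledominantsto}; and in (iii)$\Rightarrow$(ii) your use of two \emph{consecutive} positive powers to force the eigenvalue $c$ of $A$ to be positive (from $c^k>0$ and $c^{k+1}>0$) is exactly the role hypothesis (iii) is designed to play. One step needs tightening: in the descent of dominance you argue that dominance of $\mu$ for $A^k$ forces $\lambda^k=\mu$ and then conclude directly that $c$ is the unique eigenvalue of $A$ on its spectral circle. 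As written this ignores that $\mu$ has $k$ distinct complex $k$-th roots, several of which could a priori be eigenvalues of $A$ (e.g.\ $-c$ when $k$ is even). The repair is to do the multiplicity bookkeeping \emph{first}: since the spectrum of $A^k$, counted with algebraic multiplicity, consists of the $k$-th powers of the spectrum of $A$, the multiplicity of $\mu$ for $A^k$ equals the \emph{sum} of the multiplicities of all eigenvalues $\lambda$ of $A$ with $\lambda^k=\mu$; simplicity of $\mu$ then yields simultaneously that $c$ is simple and that it is the only $k$-th root of $\mu$ in the spectrum of $A$, and only then does $|\lambda|=\rho(A)\Rightarrow\lambda^k=\mu$ imply $\lambda=c$, giving dominance. (Alternatively, reuse your consecutive-powers trick at the level of eigenvalues: $\lambda^k=c^k$ from dominance in $A^k$ and $\lambda^{k+1}=c^{k+1}$ from dominance in $A^{k+1}$ give $\lambda=\lambda^{k+1}/\lambda^k=c$.) With that reordering the argument is complete and correct.
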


\noindent Further, the following theorem gives a more general result about nonnegative matrices.
\begin{theorem}[{\rm\cite[Theorem 3, p. 66]{MR0107649}}]\label{thm:gantmacher}
Let $A\in\R_{\geq 0}^{n\times n}$ be a nonnegative matrix. 
Then $A$ has an eigenvalue $\lambda\geq 0$ such that $|\mu| \leq \lambda$, for
all eigenvalues $\mu$ of $A$. 
Moreover, to this ``maximal'' eigenvalue $\lambda$ corresponds a nonnegative eigenvector $v$: $Av=\lambda v$ with $v\geq0$ and $v\neq 0$.
\end{theorem}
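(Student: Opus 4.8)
The plan is to deduce this statement from Perron's theorem by a perturbation argument. Since $A$ may have zero entries it need not be positive, so I would approximate it from above by strictly positive matrices and pass to a limit. Concretely, let $E\in\R^{n\times n}$ be the all-ones matrix and set $A_\varepsilon := A + \varepsilon E$ for $\varepsilon > 0$. Each $A_\varepsilon$ is a positive matrix, hence in particular primitive, so Perron's theorem applies: $A_\varepsilon$ has a dominant simple eigenvalue $\lambda_\varepsilon$ equal to its spectral radius, together with a positive eigenvector, which I normalize to a unit vector $v_\varepsilon$ with $v_\varepsilon > 0$ and $\|v_\varepsilon\| = 1$.

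Next I would take the limit $\varepsilon \to 0^+$. The unit sphere being compact, I can choose a sequence $\varepsilon_k \to 0$ along which $v_{\varepsilon_k} \to v$ for some unit vector $v$; since each $v_{\varepsilon_k} \geq 0$ and the nonnegative orthant is closed, the limit satisfies $v \geq 0$, and $\|v\| = 1$ forces $v \neq 0$. The eigenvalues $\lambda_{\varepsilon_k}$ are positive, and because the coefficients of the characteristic polynomial of $A_\varepsilon$ depend continuously (indeed polynomially) on $\varepsilon$, the spectral radius varies continuously; thus $\lambda_{\varepsilon_k}$ converges to some $\lambda \geq 0$. Passing to the limit in the identity $A_{\varepsilon_k} v_{\varepsilon_k} = \lambda_{\varepsilon_k} v_{\varepsilon_k}$ yields $Av = \lambda v$, so $\lambda$ is an eigenvalue of $A$ with nonnegative eigenvector $v$, which is exactly the ``moreover'' part of the statement.

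It remains to show that $\lambda$ is maximal, that is, $|\mu| \leq \lambda$ for every eigenvalue $\mu$ of $A$. Here I would identify $\lambda$ with the spectral radius of $A$. Since $\lambda_\varepsilon$ equals the spectral radius of $A_\varepsilon$ and the spectral radius is a continuous function of the matrix entries, $\lambda = \lim_k \lambda_{\varepsilon_k}$ equals the spectral radius of $A$; by definition of the spectral radius, every eigenvalue $\mu$ of $A$ then satisfies $|\mu| \leq \lambda$. This closes the argument.

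The main obstacle I anticipate is the continuity step: justifying that the Perron eigenvalue $\lambda_\varepsilon$ converges to the spectral radius of $A$ rather than to some smaller eigenvalue. This rests on the continuous dependence of the roots of the characteristic polynomial on its coefficients, and hence of the spectral radius on the matrix. The normalization $\|v_\varepsilon\| = 1$ is what prevents the eigenvector from degenerating in the limit, and extracting a convergent subsequence is what guarantees that the limiting pair $(\lambda, v)$ is an honest eigenpair of $A$.
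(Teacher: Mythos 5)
Your proof is correct. The paper does not actually prove this statement---it is quoted as a known result from Gantmacher's book \cite{MR0107649}---and your perturbation argument (approximating $A$ by the positive matrices $A+\varepsilon E$, applying Perron's theorem, and extracting a convergent subsequence of normalized eigenvectors while using continuity of the spectral radius) is precisely the classical route by which this theorem is derived from the positive case, so there is nothing to fault: the compactness step guarantees a genuine nonnegative unit eigenvector in the limit, and the continuity of roots of the characteristic polynomial correctly identifies the limiting eigenvalue with the spectral radius of $A$.
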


As noted in \cite[p.136--137]{MR2182957}, the converse of the previous theorem
is false as ``some entries of the powers of $A$ may tend to zero from negative
values".
The following result of Handelman provides a converse to Perron's theorem when
the Eigenvalue, Simplicity, and Dominance conditions are satisfied.

\begin{theorem}[{\rm\cite[Theorem 2.3]{MR637001}}]
If $A\in\R^{n\times n}$ has a real, positive, simple and dominant eigenvalue,
then $A$ is conjugate to a matrix some power of which is positive.
\end{theorem}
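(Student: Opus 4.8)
The plan is to exploit the classical power-iteration behaviour of a matrix with a simple dominant eigenvalue and to convert it into positivity after a well-chosen change of basis. Write $\lambda$ for the real, positive, simple and dominant eigenvalue, let $v$ be a right eigenvector and $w$ a left eigenvector ($Av=\lambda v$, $w^{T}A=\lambda w^{T}$), and normalise them so that $w^{T}v=1$; this normalisation is possible precisely because $\lambda$ is simple, so $w^{T}v\neq 0$. The rank-one matrix $P=vw^{T}$ is then the spectral projection onto the $\lambda$-eigenline: one has $P^{2}=P$ and, decomposing $A=\lambda P+N$ with $N=(I-P)A=A(I-P)$, the matrix $N$ carries exactly the remaining eigenvalues, all of modulus strictly less than $\lambda$ by dominance. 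Hence $\rho(N/\lambda)<1$ and
\[
    \left(\tfrac{1}{\lambda}A\right)^{k}=P+\left(\tfrac{1}{\lambda}N\right)^{k}\xrightarrow[k\to\infty]{}P .
\]

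First I would record the criterion that does the real work: if the spectral projection $P$ is entrywise positive, then, because $\lambda>0$, the limit above forces $A^{k}>0$ for all large $k$, so $A$ is eventually positive. (Equivalently, this is the implication (ii)$\Rightarrow$(i) of the eventual-positivity theorem quoted above, applied to positive left and right eigenvectors.) The difficulty is that the given $v$ and $w$ need not be positive, so $P=vw^{T}$ need not be positive. The idea is to remove this defect by conjugation.

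Next I would track how conjugation acts. If $S$ is invertible and $Z=S^{-1}AS$, then $Z$ has the same eigenvalues as $A$, with right eigenvector $S^{-1}v$ and left eigenvector $S^{T}w$, and its spectral projection is $S^{-1}PS=(S^{-1}v)(w^{T}S)$. Thus it suffices to produce an invertible $S$ for which both $S^{-1}v>0$ and $S^{T}w>0$: then the spectral projection of $Z$ is a product of two positive vectors, hence positive, and the criterion above shows that some power of $Z$ is positive, proving that $A$ is conjugate to such a matrix.

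The crux is therefore the following purely linear-algebraic construction. Fix any positive vectors $a,b\in\R^{n}$ with $b^{T}a=1$ (for instance $a=b=\tfrac{1}{\sqrt{n}}\UN$). I seek $S$ with $Sa=v$ and $w^{T}S=b^{T}$; writing $S=vb^{T}+T$, these two requirements become $Ta=0$ and $w^{T}T=0$, so $T$ must kill $a$ and have image inside $w^{\perp}$. Using the two direct-sum decompositions $\R^{n}=\mathrm{span}(a)\oplus b^{\perp}$ and $\R^{n}=\mathrm{span}(v)\oplus w^{\perp}$, which are genuine because $b^{T}a=1\neq 0$ and $w^{T}v=1\neq 0$, I would take $T$ to vanish on $\mathrm{span}(a)$ and to be a linear isomorphism from $b^{\perp}$ onto $w^{\perp}$; its block-diagonal action with respect to these decompositions then makes $S=vb^{T}+T$ invertible. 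I expect the main obstacle to be exactly the verification that a suitable isomorphism $b^{\perp}\to w^{\perp}$ can be built explicitly (the naive choice $T=(I-P)(I-Q)$, with $Q=ab^{T}$, is an isomorphism of these hyperplanes only when $b^{T}v\neq 0$), and that the resulting $S$ is invertible; giving a clean closed form for $S$, and hence for the promised explicit conjugate $Z=S^{-1}AS$, is where the care is needed.
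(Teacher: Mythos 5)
Your proof is correct, and it in fact establishes Handelman's statement in full generality; the interesting point is that your conjugation is built differently from the paper's, and the difference is a real trade-off. The analytic core is the same on both sides: you use power iteration, $\left(\tfrac{1}{\lambda}A\right)^k = P + \left(\tfrac{1}{\lambda}N\right)^k \to P = vw^\top$ (this is the paper's Lemma~\ref{lem:vuexpfast}), together with the observation that a conjugate whose rank-one spectral projection is entrywise positive must be eventually positive; your appeal to simplicity to justify $w^\top v\neq 0$ is exactly the contrapositive of the paper's Lemma~\ref{lem:multiplicity2}. The constructions of the conjugating matrix then diverge. The paper never moves the right eigenvector: it assumes $v>0$ (Theorem~\ref{thm:simpledominantsto}), or reduces to that case by a signature matrix when $v$ merely has nonzero entries (Corollary~\ref{cor:flip-primitive}), and replaces only the left eigenvector $u$ by a chosen positive $u'$ via the explicit rank-one perturbation $Q=I+v(u'-u)$, whose inverse is $I-v(u'-u)$ (Lemmas~\ref{lem:matrixQ}--\ref{lem:theconjugateZ}). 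That choice yields a closed-form conjugate $Z=\lambda vu'+(I-vu')A$ with closed-form powers $Z^k=\lambda^k vu'+(I-vu')A^k$, which is what turns the theorem into a checkable criterion --- the explicitness is the paper's whole point. Your $S$, by contrast, transports \emph{both} eigenvectors simultaneously to prescribed positive vectors $a$ and $b$, using the decompositions $\R^n=\mathrm{span}(a)\oplus b^\perp$ and $\R^n=\mathrm{span}(v)\oplus w^\perp$; this requires no hypothesis at all on the sign pattern of $v$, so, unlike the paper's machinery (which cannot handle a right eigenvector with a zero entry), it proves Handelman's theorem as stated, for every $A$. What you give up is explicitness: $S$ depends on an arbitrary isomorphism $b^\perp\to w^\perp$, so your conjugate is not canonical and yields no computational test. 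Finally, your closing worry is not a gap for the statement at hand: since $b^\perp$ and $w^\perp$ both have dimension $n-1$, an isomorphism between them exists trivially (send a basis to a basis), and your verification that the block-defined $S$ satisfies $Sa=v$, $w^\top S=b^\top$ and is invertible goes through; an explicit formula for $S$ would only be needed to recover the paper's stronger, checkable version of the result.
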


In Section~\ref{sec:main}, we provide a conjugate matrix to check for eventual
positivity.

\section{Preliminary lemmas}\label{sec:preliminary}

We start by setting some writing conventions.
To avoid the multiple usage of transposition symbols, here and in the rest of
the paper, the letter $u$ denotes a row-vector and $v$ denotes a column-vector.
If $A$ is a matrix or a vector, we denote by $A^\top$ the transpose of $A$.
Further, we denote by $\UN$ the column vector of dimension $n$ with all entries
equal to 1, and denote the $n\times n$ identity matrix by $I$.
In many of the results in this article, for instance Lemma~\ref{lem:vuexpfast},
Lemma~\ref{lem:conjugateQ} or Theorem~\ref{thm:simpledominantsto}, we assume
that the left and right eigenvectors $u$ and $v$ associated to an
eigenvalue~$\lambda$ of a matrix can be chosen such that $uv=(1)$. As the next lemma shows, 
if it is not the case, the algebraic multiplicity of $\lambda$ is at least $2$.

\begin{lemma}
\label{lem:multiplicity2}
Let $A\in\R^{n\times n}$. Suppose that $Av=\lambda v$ and $uA=\lambda u$, for
some $\lambda\in\C$.
If $uv=(0)$, then the algebraic multiplicity of $\lambda$ is at least $2$.
\end{lemma}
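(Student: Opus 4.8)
The plan is to show that $v$ being a right eigenvector and $u$ a left eigenvector for $\lambda$, together with the orthogonality $uv=(0)$, forces the geometric or generalized eigenspace structure to be degenerate. I would distinguish two cases. First, if $v$ and the geometric eigenspace already reveal multiplicity at least $2$ (e.g. there is another independent eigenvector), we are done; the interesting case is when the geometric multiplicity of $\lambda$ is exactly $1$, so that (up to scalar) $v$ is the unique right eigenvector. The goal then is to produce a generalized eigenvector, i.e.\ a vector $w$ with $(A-\lambda I)w = v$, which certifies a Jordan block of size at least $2$ and hence algebraic multiplicity at least $2$.

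Concretely, I would study the linear system $(A-\lambda I)w = v$. By the Fredholm alternative, this system is solvable for $w$ if and only if $v$ lies in the column space of $A-\lambda I$, equivalently $v$ is orthogonal to the left kernel of $A-\lambda I$. In the geometric-multiplicity-one situation, the left kernel of $A-\lambda I$ is spanned by $u$, so the solvability condition reads exactly $uv=(0)$, which is our hypothesis. Thus the system has a solution $w$, and this $w$ is a genuine generalized eigenvector (it cannot itself be a scalar multiple of $v$, since $(A-\lambda I)$ kills $v$ but not $w$), so $v$ and $w$ are linearly independent and span an invariant subspace on which $A-\lambda I$ is nilpotent of index $2$. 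This gives a Jordan block of size $\geq 2$ and the algebraic multiplicity is at least $2$.

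The cleanest way to package this uniformly, without splitting into cases, is to work with the rational canonical / Jordan form over $\C$ or, more elementarily, to invoke the Fredholm alternative directly: $\dim \ker(A-\lambda I) + \operatorname{rank}(A-\lambda I) = n$, and $v\in\operatorname{im}(A-\lambda I)$ precisely when $v$ is orthogonal to every left null vector. When geometric multiplicity exceeds $1$ we are immediately done; when it equals $1$ the condition $uv=(0)$ delivers the generalized eigenvector as above. Either way the algebraic multiplicity strictly exceeds the geometric multiplicity or the geometric multiplicity is already at least $2$, so in all cases the algebraic multiplicity is at least $2$.

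The main obstacle I anticipate is the bookkeeping when the left eigenvector $u$ is not known to span the entire left kernel of $A-\lambda I$: the equivalence ``$uv=(0)$ iff $v\in\operatorname{im}(A-\lambda I)$'' is clean only under geometric multiplicity one. I would handle this by noting that if the geometric multiplicity is already at least $2$, the conclusion is immediate (algebraic multiplicity dominates geometric multiplicity), so I may safely assume geometric multiplicity one and then the single vector $u$ does span the left kernel, making the Fredholm criterion exactly the hypothesis $uv=(0)$. A careful statement of the Fredholm alternative over $\C$ (using the ordinary bilinear pairing, not a conjugate-linear one, since $u$ and $v$ are the algebraic left/right eigenvectors) is the one place where I would be most careful to get the orthogonality condition to read literally $uv=(0)$ rather than $u\overline{v}=(0)$.
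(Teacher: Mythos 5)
Your proposal is correct, but it takes a genuinely different route from the paper's. The paper argues in one shot, with no case distinction: after normalizing $u$ and $v$, it uses $uv=(0)$ to complete $\{v,u^\top\}$ to an orthonormal basis of $\R^n$, forms the orthogonal matrix $Q=\left(v\mid u^\top\mid B\right)$, and reads off from $(Q^{-1}AQ)e_1=\lambda e_1$ and $e_2^\top(Q^{-1}AQ)=\lambda e_2^\top$ that $Q^{-1}AQ$ is similar to a block-triangular matrix with $\lambda$ appearing twice on the diagonal, so the characteristic polynomial has $\lambda$ as a root of multiplicity at least $2$. You instead split on the geometric multiplicity: if it is at least $2$ you are done because algebraic multiplicity dominates geometric multiplicity, and if it equals $1$ you solve $(A-\lambda I)w=v$ via the Fredholm alternative, producing a Jordan chain of length $2$. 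Your reduction is what makes the Fredholm step legitimate: in the multiplicity-one case, $\operatorname{rank}(A-\lambda I)=\operatorname{rank}\bigl((A-\lambda I)^\top\bigr)=n-1$ forces $u$ to span the entire left kernel, so the solvability condition is literally $uv=(0)$. Both arguments are sound; the paper's is shorter and more elementary, while yours buys two things. First, it is robust over $\C$: you correctly insist on the bilinear (not Hermitian) pairing, and indeed the Fredholm identity $\operatorname{im}(M)=\{y : xy=0 \text{ for all } x \text{ with } xM=0\}$ holds over any field. By contrast, the paper's Gram--Schmidt step is delicate when $\lambda$ (hence $u$, $v$) is complex, since with respect to the bilinear pairing a complex vector can be isotropic and thus cannot be normalized, whereas Hermitian orthonormalization does not encode the hypothesis $uv=(0)$; the paper's argument is really written for the real case. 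Second, your proof yields extra structural information, namely an explicit generalized eigenvector and a size-two Jordan block whenever the geometric multiplicity is $1$, rather than only the multiplicity bound.
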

\begin{proof}
    We suppose, without loss of generality, that both $u$ and $v$ are unit
    vectors. Since $uv=(0)$, there exists a orthonormal basis of column
    vectors of $\R^n$ containing $u^\top$ and $v$. From such a basis, we
    construct the matrix $Q=\left(v\mid u^\top\mid B\right)$ where $B$ is a
    $n\times(n-2)$ matrix made of
    the $n-2$ other column vectors of the basis. 
    Notice that $v=Qe_1$ and $u=e_2^\top Q^\top$.
    Since~$Q$ is orthogonal, we also have $Q^{-1}=Q^\top$.
    We obtain
    \begin{align*}
	&(Q^{-1}AQ)e_1 = 
	Q^{-1}Av = 
	\lambda Q^{-1}v = 
	\lambda e_1,\\
	&e_2^\top (Q^{-1}AQ) = 
	e_2^\top Q^\top AQ = 
	uAQ = 
	\lambda uQ = 
	\lambda e_2^\top.
    \end{align*}
    Therefore
    \[
	Q^{-1}AQ = 
	\left(\begin{array}{ccc}
		\lambda & * & * \\
		0 & \lambda & 0 \\
		0 & * & *
	\end{array}\right),
	\qquad
	\text{which is similar to}
	\qquad
	\left(\begin{array}{ccc}
		\lambda & * & * \\
		0 & \lambda & 0 \\
		0 & 0 & *
	\end{array}\right);
    \]
    and we conclude that the algebraic multiplicity of the eigenvalue
    $\lambda$ is at least $2$.
\end{proof}

% sage: A = matrix(3,[-1,2,0,-2,2,1,-2,3,0])
\begin{example}\label{example:-120-221-230}
Let $A=
\left(\begin{array}{rrr}
	-1 & 2 & 0 \\
	-2 & 2 & 1 \\
	-2 & 3 & 0
\end{array}\right)$
for which
$\UN$ is a right eigenvector
and $u=(2, -1, -1)$ is a left eigenvector
associated to eigenvalue
$\lambda=1$. 
We have that $u\UN=(0)$.
From Lemma~\ref{lem:multiplicity2}, the algebraic multiplicity of $\lambda=1$
is at least $2$. Indeed, 
the characteristic polynomial is
$\chi_A(\lambda)=(\lambda + 1) (\lambda - 1)^{2}$.
\end{example}

\begin{remark}
In the previous example, $\lambda$ is neither simple nor dominant eigenvalue.
Therefore, the fact that entries of columns of powers of $A$ are either
all positive or all negative (a property shared by elements of infinite
Coxeter groups, see Section~\ref{sec:coxeter}) is not enough to conclude the
simplicity nor dominance of the spectral radius.
\end{remark}

Now we state the following folklore lemma
for real matrices with real but not
necessarily positive eigenvectors for which the spectral radius is a simple
and dominant eigenvalue.

\begin{lemma}\label{lem:vuexpfast}
Let $A\in\R^{n\times n}$ with spectral radius $\rho$.
Assume that $\rho$ is an eigenvalue of $A$ with left eigenvector $u$ and
right eigenvector $v$, chosen such that $uv = (1)$.
If $\rho$ is simple and dominant,
then $(\frac{1}{\rho}A)^k$ converges to the matrix $vu$ exponentially fast.
\end{lemma}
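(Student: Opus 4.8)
The plan is to realize $vu$ as the spectral projection of $A$ onto the $\rho$-eigenline and to peel it off. First I would record the consequences of the normalization $uv=(1)$. Setting $P:=vu$, associativity gives $P^2=v(uv)u=vu=P$, so $P$ is idempotent, while the eigenvector relations yield $AP=(Av)u=\rho\,vu=\rho P$ and $PA=v(uA)=\rho\,vu=\rho P$. Thus $P$ is a rank-one projection commuting with $A$, with $\operatorname{range}P=\R v$ and $\ker P=\{x\in\R^n: ux=(0)\}$, both of which are $A$-invariant.

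Next I would split $A$ along this projection. Writing $N:=A-\rho P$, one computes $PN=PA-\rho P^2=\rho P-\rho P=0$ and symmetrically $NP=0$. Since moreover $P^2=P$, an easy induction (all cross terms dropping out) gives
\[
    A^k=(\rho P+N)^k=\rho^k P+N^k ,
\]
whence
\[
    \left(\tfrac{1}{\rho}A\right)^k=P+\left(\tfrac{1}{\rho}N\right)^k .
\]
It therefore remains to show that $(\tfrac1\rho N)^k\to 0$ exponentially.

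The heart of the argument is to control the spectral radius of $N$. Because $A$ respects the direct sum $\operatorname{range}P\oplus\ker P$, in an adapted basis it is block diagonal with the $1\times1$ block $(\rho)$ and a block $A':=A|_{\ker P}$; correspondingly $N$ is block diagonal with blocks $(0)$ and $A'$. Hence $\chi_A(\lambda)=(\lambda-\rho)\,\chi_{A'}(\lambda)$, and the simplicity of $\rho$ forces $\rho$ to not be a root of $\chi_{A'}$. By dominance, every eigenvalue of $A'$ has modulus at most some $\rho'<\rho$, so the spectral radius of $N$ equals $\rho'<\rho$ and that of $\tfrac1\rho N$ equals $\rho'/\rho<1$. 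By Gelfand's formula (equivalently, the standard estimate that a matrix $M$ of spectral radius $r<1$ satisfies $\|M^k\|\le C\,s^k$ for any fixed $s\in(r,1)$), we obtain $\|(\tfrac1\rho N)^k\|\le C\,(\rho'/\rho+\varepsilon)^k\to 0$, which is exactly the claimed exponential convergence of $(\tfrac1\rho A)^k$ to $P=vu$.

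The only delicate step is the last one: identifying the spectrum of $N$ with the subdominant part of the spectrum of $A$ and converting this into an exponential bound. This is precisely where both hypotheses are genuinely used, simplicity to excise $\rho$ from $A'$ and dominance to push all remaining moduli strictly below $\rho$; everything preceding it is formal manipulation of the rank-one projection $vu$.
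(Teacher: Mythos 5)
Your proof is correct and complete. There is, however, nothing in the paper to compare it against: the paper states this as a ``folklore lemma'' and gives no proof, so your argument in fact fills that gap. Your route --- realizing $P=vu$ as the rank-one spectral projector (idempotent, with $AP=PA=\rho P$), splitting $A^k=\rho^k P+N^k$ via $PN=NP=0$, identifying the spectrum of $N$ as $\{0\}$ together with the subdominant spectrum of $A$ (simplicity excises $\rho$ from $A'$, dominance pushes the rest strictly below $\rho$), and finishing with Gelfand's formula --- is the standard and cleanest argument. The other standard route, which the paper implicitly relies on for its Lemma~\ref{lem:semisimple} (cited from Meyer's book), goes through the Jordan normal form: $\rho^{-k}J^k$ converges because the dominant block is $1\times 1$ and the remaining blocks have spectral radius strictly less than $\rho$. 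Your projection argument reaches the same conclusion without invoking the Jordan decomposition, and it has the advantage that the limit $vu$ appears intrinsically as the spectral projector rather than having to be identified after conjugating back from Jordan form; the only implicit hypothesis you share with the statement itself is $\rho>0$, which is forced by the expression $(\tfrac{1}{\rho}A)^k$.
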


The converse of the previous lemma is false. If $(\frac{1}{\rho}A)^k$ converges,
then one cannot conclude the simplicity of the dominant eigenvalue.
Nevertheless, we may prove the semisimplicity: an eigenvalue $\lambda$ of $A\in\R^{n\times n}$
is called \defn{semisimple} when its geometric and algebraic multiplicities
are equal. The following result can be found, for instance, in
\cite[p.629--630]{MR1777382}. 

\begin{lemma}\label{lem:semisimple}
Let $A\in\R^{n\times n}$ with spectral radius $\rho$. 
Let $J=Q^{-1}AQ$ be the Jordan normal form of $A$.
The following statements are equivalent.
\begin{enumerate}[\rm (i)]
    \item $\lim_{k\rightarrow\infty} \rho^{-k}J^k$ converges,
    \item $\lim_{k\rightarrow\infty} \rho^{-k}A^k$ converges,
    \item $\rho$ is a semisimple dominant eigenvalue.
\end{enumerate}
\end{lemma}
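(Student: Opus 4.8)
The plan is to reduce everything to the block structure of the Jordan form and then analyze convergence one Jordan block at a time. The equivalence of (i) and (ii) is immediate, and I would dispose of it first: since $A^k = QJ^kQ^{-1}$, we have $\rho^{-k}A^k = Q(\rho^{-k}J^k)Q^{-1}$, and conjugation by the fixed invertible matrix $Q$ is a linear homeomorphism on $\C^{n\times n}$, so $\rho^{-k}A^k$ converges if and only if $\rho^{-k}J^k$ does. This leaves the substantive content, namely (i) $\Leftrightarrow$ (iii).

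For (i) $\Leftrightarrow$ (iii), since $J$ is block diagonal, $\rho^{-k}J^k$ converges if and only if $\rho^{-k}J_i^k$ converges for every Jordan block $J_i$. I would write each block as $J_i = \lambda_i I + N_i$ with $N_i$ the nilpotent shift, and use the binomial expansion
\[
\rho^{-k}J_i^k = \sum_{j\ge 0}\binom{k}{j}\Big(\frac{\lambda_i}{\rho}\Big)^{k}\lambda_i^{-j}\,N_i^j,
\]
so that each entry of $\rho^{-k}J_i^k$ is a fixed scalar multiple of $\binom{k}{j}(\lambda_i/\rho)^{k}$ for some $j$ between $0$ and the block size minus one.

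The heart of the argument is then a four-way case analysis on the term $\binom{k}{j}(\lambda_i/\rho)^{k}$ as $k\to\infty$. If $|\lambda_i|<\rho$, the geometric decay of $(\lambda_i/\rho)^{k}$ beats the polynomial growth $\binom{k}{j}\sim k^j/j!$, so the block tends to $0$. If $|\lambda_i|=\rho$ but $\lambda_i\neq\rho$, then $\lambda_i/\rho$ is a point of the unit circle different from $1$, so already the diagonal term ($j=0$) oscillates and does not converge. If $\lambda_i=\rho$ and the block has size at least $2$, the off-diagonal term with $j=1$ equals $k/\rho$, which diverges. Finally, if $\lambda_i=\rho$ and the block has size $1$, then $\rho^{-k}J_i^k=1$ converges. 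I would then collect these cases: the spectral radius guarantees at least one eigenvalue of modulus $\rho$, so convergence forces every eigenvalue on the circle of radius $\rho$ to equal $\rho$ (this is exactly dominance together with $\rho$ being an eigenvalue) and forces every Jordan block for $\rho$ to have size $1$ (this is exactly semisimplicity of $\rho$). Conversely, these two conditions make every block converge, proving (iii) $\Rightarrow$ (i).

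I expect the main obstacle to be bookkeeping rather than depth: keeping the case analysis exhaustive (in particular not overlooking complex eigenvalues, and the eigenvalue $-\rho$, when deducing dominance) and correctly translating the statement that all Jordan blocks associated to $\rho$ have size one into the stated condition that the geometric and algebraic multiplicities of $\rho$ coincide.
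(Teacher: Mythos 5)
Your proof is correct and follows the standard Jordan-block analysis --- which is essentially the same route as the source the paper relies on for this lemma (the paper gives no proof of its own, only a citation to Meyer, pp.~629--630, where this block-by-block argument appears). The only cosmetic flaw is that your factored expansion involving $\lambda_i^{-j}$ presupposes $\lambda_i \neq 0$; for a nilpotent block simply note that $J_i^k = 0$ once $k$ reaches the block size, so that block falls under your first case ($|\lambda_i| < \rho$) anyway.
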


\begin{remark}
    The trace of the limit (when it exists) is equal to the (algebraic and
    geometric) multiplicity of the semisimple dominant eigenvalue.
\end{remark}

% sage: A = matrix(3,[-1,1,1,-3,3,1,-3,1,3])
\begin{example}\label{example:-111-331-313}
Let $A=
\left(\begin{array}{rrr}
-1 & 1 & 1 \\
-3 & 3 & 1 \\
-3 & 1 & 3
\end{array}\right)$
of spectral radius $\rho=2$.
In this case, the limit converges
to a matrix of rank $2$:
\[
\lim_{k\to\infty} \left(\frac{1}{\rho}A\right)^k
= 
\left(\begin{array}{rrr}
-2 & 1 & 1 \\
-3 & 2 & 1 \\
-3 & 1 & 2
\end{array}\right).
\]
From Lemma~\ref{lem:semisimple}, we deduce that $\rho$ is a semisimple
dominant eigenvalue of $A$. Indeed, $(1,2,1)^\top$ and $(1,1,2)^\top$ form a
basis of the right eigenspace associated to $\rho$.
\end{example}

% sage: A = matrix(3,[-1,1,1,-2,2,1,-2,1,2])
\begin{example}\label{example:-111-221-212}
Let $A=
\left(\begin{array}{rrr}
-1 & 1 & 1 \\
-2 & 2 & 1 \\
-2 & 1 & 2
\end{array}\right)$
of spectral radius $\rho=1$
for which 
\[
    A^k=
\left(\begin{array}{rrr}
1-2k & k & k \\
-2k & k+1 & k \\
-2k & k & k+1
\end{array}\right). 
\]
Obviously, the limit $\lim_{k\to\infty} \left(\frac{1}{\rho}A\right)^k$ does
not converge. We conclude from Lemma~\ref{lem:semisimple} that
the spectral radius is not a semisimple eigenvalue of $A$.
Indeed, $A$ has only one eigenvalue equal to~$1$ of algebraic degree $3$ and of
geometric multiplicity $2$. 
\end{example}

We finish this section with two basic lemmas that allow to suppose that a
right eigenvector of a matrix is nonnegative using similarity through
signature matrices. A \defn{signature matrix} is a diagonal matrix where each
diagonal entry is $\pm 1$.

\begin{lemma}\label{lem:flip-iff-right}
Let $A\in\R^{n\times n}$ and $B=SAS$, where $S$ is a signature matrix.
Then
\begin{center}
\begin{tabular}{l}
    $Av =\lambda v$ if and only if $B (Sv) = \lambda (Sv)$ and\\
    $uA =\lambda u$ if and only if $(uS) B = \lambda (uS)$.
\end{tabular}
\end{center}
\end{lemma}

\begin{proof}
If $Av=\lambda v$, then
\[
B (S v) = (B S) v = (S A) v = S (Av) = S (\lambda v) = \lambda (S v).
\]
Conversely, if $B (Sv)=\lambda (Sv)$, then
\[
Av = A (S S) v 
   = (A S) S v 
   = (S B) S v
   = S (B S v)
   = S (\lambda S v)
   = \lambda v.
\]
The proof for left eigenvectors is the same.
\end{proof}

\begin{lemma}
Let $A\in\R^{n\times n}$ with spectral radius $\rho$.  If there
exists a signature matrix $S$ such that $SAS$ is primitive, then $\rho$ is a
simple and dominant eigenvalue of $A$.
\end{lemma}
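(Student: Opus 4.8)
The plan is to observe that conjugation by a signature matrix is a similarity transformation, and that each of the three properties at stake---being an eigenvalue, being simple, and being dominant---is an invariant of the multiset of eigenvalues, hence preserved under similarity. First I would record that, since $S$ is diagonal with entries $\pm 1$, we have $S^2=I$, so $S=S^{-1}$ and therefore $B:=SAS=S^{-1}AS$ is similar to $A$.

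Next I would invoke the invariance of the characteristic polynomial under similarity, namely $\det(B-\lambda I)=\det\bigl(S^{-1}(A-\lambda I)S\bigr)=\det(A-\lambda I)$. Consequently $A$ and $B$ have exactly the same eigenvalues with exactly the same algebraic multiplicities; in particular their spectral radii coincide, so $\rho$ is also the spectral radius of $B$.

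Then, since $B=SAS$ is primitive by hypothesis, Perron's theorem applies to $B$ and yields that its spectral radius $\rho$ is a simple and dominant eigenvalue of $B$. Transferring back through the equality of characteristic polynomials, $\rho$ is an eigenvalue of $A$ of algebraic multiplicity one, which is simplicity, and because the whole multiset of eigenvalues of $A$ agrees with that of $B$, the modulus $\rho$ strictly exceeds the modulus of every other eigenvalue of $A$, which is dominance.

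The argument is short and I do not anticipate a genuine obstacle; the only point requiring care is to confirm that simplicity and dominance are spectral invariants, i.e.\ that they depend only on the multiset of eigenvalues with multiplicities, which is unchanged under the similarity $A\mapsto SAS$. I would also remark, for context, that the lemma asserts nothing about positivity of the eigenvectors of $A$ itself: by Lemma~\ref{lem:flip-iff-right} a right eigenvector $v$ of $A$ for $\rho$ corresponds to the eigenvector $Sv$ of $B$, which Perron guarantees to be positive, so the eigenvectors of $A$ may well carry mixed signs.
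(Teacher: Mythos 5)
Your proof is correct and takes essentially the same route as the paper: the paper's proof is a one-line appeal to the fact that the primitive matrix $SAS$ is similar to $A$, so Perron's theorem transfers the simple and dominant eigenvalue back to $A$. You have simply made explicit the details the paper leaves implicit, namely $S^{-1}=S$, the invariance of the characteristic polynomial under similarity, and the fact that simplicity and dominance depend only on the spectrum with multiplicities.
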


\begin{proof}
Perron's theorem applies on the primitive matrix $SAS$ which is similar to $A$.
\end{proof}

\section{Eventually positive conjugate matrices}\label{sec:main}

The main result of this section is Theorem~\ref{thm:simpledominantsto}, which
provides a conjugate matrix to test for eventual positivity. 
This result is declined for a particular value of the right eigenvector in
Corollary~\ref{cor:when1isrighteigv} and for more general values in 
Corollary~\ref{cor:flip-primitive}.
Proposition~\ref{prop:nonnegcase} is a result in the spirit of
Theorem~\ref{thm:gantmacher} for nonnegative matrices.
We first prove three lemmas on conjugacy of matrices, which are used to prove
the main theorem.
Recall that $I$ is the $n\times n$ identity matrix.

\begin{lemma}
\label{lem:matrixQ}
Let $Q=I + v (u'-u)$, where
$v$ is a column vector and $u$ and $u'$ are row vectors such that
$uv=u'v$. Then $Q$ is invertible and
$Q^{-1}=I - v (u'-u)$.
\end{lemma}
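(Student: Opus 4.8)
The plan is to verify directly that the proposed inverse works, which amounts to a rank-one (Sherman--Morrison type) computation. Write $w = u'-u$ for the row vector. The hypothesis $uv = u'v$ says precisely that $(u'-u)v = wv = (0)$, a vanishing $1\times 1$ matrix, i.e.\ a scalar equal to zero. With this notation $Q = I + vw$ is the identity plus the rank-one outer product $vw$, and the claimed inverse is $I - vw = I - v(u'-u)$.

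First I would expand the product $Q(I - vw) = (I + vw)(I - vw)$. Distributing gives $I - vw + vw - (vw)(vw)$, so the two linear terms cancel at once, and it remains only to handle the quadratic term. Using associativity together with the fact that $wv$ is a scalar, one has $(vw)(vw) = v(wv)w = v\,(0)\,w = 0$, where the middle equality is exactly where the hypothesis $uv=u'v$ enters. Hence $Q(I - vw) = I$.

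Then I would carry out the symmetric computation $(I - vw)Q = (I - vw)(I + vw) = I + vw - vw - v(wv)w = I$, again invoking $wv = (0)$ to kill the quadratic term. Since $Q$ admits a two-sided inverse, it is invertible and $Q^{-1} = I - v(u'-u)$, as asserted.

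The only real obstacle is bookkeeping: one must keep straight that $wv = (u'-u)v$ is a $1\times 1$ matrix that the hypothesis forces to vanish, while $vw$ is the full $n\times n$ outer product, and that the quadratic cross term factors through associativity as $v(wv)w$ so that the scalar $wv=0$ annihilates it. Beyond correctly distinguishing the scalar $wv$ from the matrix $vw$, there is no analytic or structural difficulty.
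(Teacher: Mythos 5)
Your proof is correct and takes essentially the same route as the paper: both verify that $I - v(u'-u)$ is a two-sided inverse by direct expansion, with the key step being that the quadratic term $v\bigl((u'-u)v\bigr)(u'-u)$ vanishes because the hypothesis $uv=u'v$ forces the scalar $(u'-u)v$ to be zero (the paper phrases this as nilpotence of $X=v(u'-u)$). No gap.
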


\begin{proof}
Let $X= v (u'-u)$. 
We show that $I-X$ is the inverse of $Q=I+X$.
On the one hand, $Q(I-X)=(I+X)(I-X)=I-X^2$. 
On the other hand, $(I-X)Q=I-X^2$.
Thus, we only need to verify that $X$ is nilpotent, which is the case:
\[
    X^2
    = v (u'-u) v (u'-u)
    = v (u'v-uv) (u'-u)
    =(0).\qedhere
\]
\end{proof}

In the following lemma, we conjugate $A$ in order to change some left
eigenvector while preserving the right eigenvector associated to the same
eigenvalue and preserving the left eigenvectors associated to other
eigenvalues.

\begin{lemma}
\label{lem:conjugateQ}
Let $A\in\R^{n\times n}$ with right eigenvector $v$ and left
eigenvector $u$ associated to the eigenvalue $\lambda$ chosen such that
$uv=(1)$. For all row vector $u'$ such that $u'v=(1)$, there exists a matrix
$Z$ conjugate to $A$ such that 
\begin{center}
    {\rm(i)} $u'Z =\lambda u'$, \qquad
    {\rm(ii)} $Zv  =\lambda v$, \qquad and \qquad
    {\rm(iii)} if $wA =\mu w$ and $\mu\neq\lambda$, then $wZ=\mu w$.
\end{center}
\end{lemma}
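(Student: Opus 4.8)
The plan is to take the explicit conjugating matrix furnished by Lemma~\ref{lem:matrixQ} and set $Z = Q^{-1}AQ$, where $Q = I + v(u'-u)$. Since the hypotheses give $uv = (1) = u'v$, Lemma~\ref{lem:matrixQ} applies verbatim, so $Q$ is invertible with $Q^{-1} = I - v(u'-u)$; in particular $Z$ is genuinely conjugate to $A$, which is the only global requirement. The three eigenvector conditions (i)--(iii) will then follow from a handful of one-line computations, once the right auxiliary identities for $Q$ are recorded.

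First I would compute how $Q$ and $Q^{-1}$ act on the distinguished vectors, using only the normalizations $uv = u'v = (1)$. Direct expansion gives $Qv = v + v(u'v - uv) = v$, hence $Q^{-1}v = v$; likewise $uQ = u + (uv)(u'-u) = u'$ and $u'Q^{-1} = u' - (u'v)(u'-u) = u$. With these in hand, condition (ii) is immediate, since $Zv = Q^{-1}A(Qv) = Q^{-1}(\lambda v) = \lambda v$, and condition (i) follows from $u'Z = (u'Q^{-1})AQ = uAQ = \lambda(uQ) = \lambda u'$, where I used $u'Q^{-1} = u$ and $uA = \lambda u$.

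The one step needing an extra ingredient is (iii). Given a left eigenvector $w$ with $wA = \mu w$ and $\mu \neq \lambda$, I would first establish the standard orthogonality $wv = (0)$: multiplying $wA = \mu w$ on the right by $v$ and $Av = \lambda v$ on the left by $w$ yields $(\mu - \lambda)(wv) = (0)$, so $wv = (0)$ because $\mu \neq \lambda$. This orthogonality is exactly what makes $Q$ act trivially on $w$, namely $wQ^{-1} = w - (wv)(u'-u) = w$, and consequently $wZ = wAQ = \mu(wQ) = \mu\bigl(w + (wv)(u'-u)\bigr) = \mu w$. I expect this orthogonality observation to be the only real content of the argument; everything else is routine expansion governed by the two normalization conditions, and there is no genuine obstacle to anticipate.
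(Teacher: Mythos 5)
Your proposal is correct and follows essentially the same route as the paper's own proof: the same conjugating matrix $Q = I + v(u'-u)$ from Lemma~\ref{lem:matrixQ}, the same identities $uQ = u'$, $u'Q^{-1} = u$, $Qv = Q^{-1}v = v$, and the same orthogonality argument $wv=(0)$ for part (iii). No gaps.
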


\begin{proof}
Let $Q=I + v (u'-u)$.
From Lemma~\ref{lem:matrixQ}, $Q$ is invertible and
$Q^{-1}=I - v (u'-u)$.
Let $Z=Q^{-1} A Q$ be conjugate to $A$.

\noindent
(i)
We have
\[
u' Q^{-1}
= u' \left(I - v (u'-u)\right)
= u' - u' v (u'-u)
= u' - u' + u = u
\]
and
\[
uQ 
 = u \left(I + v (u'-u)\right) 
 = u + u v (u'-u)
 = u + u' - u
 = u'.
\]
Therefore $u' Z 
= u' Q^{-1} A Q
= u A Q
= \lambda u Q
= \lambda u'$.

\noindent
(ii)
We have
\[
Qv 
= \left(I + v (u'-u)\right) v
= v + v\left(u'v - uv\right)
= v
\]
and $Q^{-1}v = v$.
Then 
\[
Zv
= Q^{-1} A Q v
=  Q^{-1} A v
= \lambda Q^{-1} v
= \lambda v.
\]

\noindent
(iii)
Suppose $wA=\mu w$ and $\mu\neq\lambda$. Then, $w$ is orthogonal to $v$, i.e., 
$wv=(0)$ since
\[
	(\mu-\lambda)wv= \mu wv - \lambda wv = (wA)v - w(Av) = (0).
\]
Because $wv=(0)$, we have that $w Q^{-1} = w$ and $w Q = w$. Therefore
\[
w Z 
= w Q^{-1} A Q
= w A Q
= \mu w Q
= \mu w.\qedhere
\]
\end{proof}

\begin{lemma}\label{lem:theconjugateZ}
Let $A\in\R^{n\times n}$ with right eigenvector $v$ and left
eigenvector $u$ associated to the eigenvalue $\lambda$ chosen such that
$uv=(1)$.
For all row vector $u'$ such that $u'v=(1)$, the matrix
$\lambda vu' + (I-vu')A$
is conjugate to $A$. Moreover, for every integer $k\geq0$,
\[
    \left(\lambda vu' + (I-vu')A \right)^k
    = \lambda^k vu' + (I-vu')A^k.
\]
\end{lemma}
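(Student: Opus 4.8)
The plan is to treat the two assertions separately: the conjugacy first, then the power identity by induction. For the conjugacy I would reuse the similarity transformation already built in Lemma~\ref{lem:conjugateQ}. Set $Q = I + v(u'-u)$; since $uv = u'v = (1)$, Lemma~\ref{lem:matrixQ} applies and gives that $Q$ is invertible with $Q^{-1} = I - v(u'-u)$. I would then compute $Q^{-1}AQ$ directly. Using $Av = \lambda v$ to get $AQ = A + \lambda v(u'-u)$, and expanding $(I - v(u'-u))(A + \lambda v(u'-u))$, the quadratic term $v(u'-u)v(u'-u)$ vanishes because $(u'-u)v = u'v - uv = (0)$, which is exactly the nilpotency exploited in Lemma~\ref{lem:matrixQ}. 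The remaining terms simplify using $uA = \lambda u$ (so that the contribution $vuA = \lambda vu$ cancels against $\lambda vu$), leaving $Q^{-1}AQ = \lambda vu' + (I-vu')A$. This identifies the stated matrix as $Q^{-1}AQ$, hence conjugate to $A$.

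For the power formula I would argue by induction on $k$, writing $P = vu'$ and $Z = \lambda P + (I-P)A$ for the matrix in question. The two facts that drive everything are that $P$ is idempotent, $P^2 = v(u'v)u' = vu' = P$ (using $u'v = (1)$), and that $AP = (Av)u' = \lambda vu' = \lambda P$ (using $Av = \lambda v$); note that neither of these uses the left eigenvector $u$, so this part holds regardless of $u$. The base case $k=0$ reads $I = P + (I-P)$, which is immediate. For the inductive step I would multiply $Z^k = \lambda^k P + (I-P)A^k$ by $Z$ on the left and expand into four products. Two of them vanish: the product arising from $\lambda P$ and $(I-P)A^k$ contains the factor $P(I-P) = (0)$, and the product arising from $(I-P)A$ and $\lambda^k P$ equals $\lambda^k(I-P)AP = \lambda^{k+1}(I-P)P = (0)$, using $AP = \lambda P$ and $(I-P)P = (0)$. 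The surviving terms are $\lambda^{k+1}P^2 = \lambda^{k+1}P$ and $(I-P)A(I-P)A^k$; rewriting $A(I-P) = A - AP = A - \lambda P$ and using $(I-P)P = (0)$ once more collapses the latter to $(I-P)A^{k+1}$. This yields $Z^{k+1} = \lambda^{k+1}P + (I-P)A^{k+1}$, closing the induction.

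The computations are routine, so the only place demanding care is the bookkeeping in the inductive step: one must check that every cross term is killed either by the idempotency relations $P(I-P) = (I-P)P = (0)$ or by the eigenvalue relation $AP = \lambda P$, and that the single surviving off-diagonal product $(I-P)A(I-P)A^k$ telescopes to $(I-P)A^{k+1}$ rather than leaving a spurious $\lambda P$ term. Keeping the identity $A(I-P) = A - \lambda P$ at hand makes this transparent, and I expect no genuine obstacle beyond this algebra.
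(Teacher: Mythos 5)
Your proof is correct. The conjugacy half coincides with the paper's argument: both implement the similarity by $Q = I + v(u'-u)$ from Lemma~\ref{lem:matrixQ} and expand $Q^{-1}AQ$, killing the quadratic term by nilpotency and the $vu$ terms by $uA=\lambda u$. Where you genuinely diverge is the power formula. The paper obtains it from the conjugation itself: it expands $Z^k = Q^{-1}A^kQ = (I-X)A^k(I+X)$ with $X = v(u'-u)$, using $A^kX=\lambda^kX$, $XX=(0)$, and $uA^k=\lambda^ku$ (the last hidden in the vanishing of $vu(\lambda^kI-A^k)$), so that the displayed identity is literally the statement $Z^k=Q^{-1}A^kQ$. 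You instead run a self-contained induction on $k$ driven by the projection relations $P^2=P$, $P(I-P)=(I-P)P=(0)$ for $P=vu'$, and the intertwining $AP=\lambda P$. This buys a small but real generalization, which you correctly flag: the power identity needs only $Av=\lambda v$ and $u'v=(1)$, with no left eigenvector at all, whereas the conjugacy assertion genuinely needs $u$. Indeed, for $A=\left(\begin{smallmatrix}1&1\\0&1\end{smallmatrix}\right)$, $v=(1,0)^\top$, $\lambda=1$, $u'=(1,0)$ (here no left eigenvector satisfies $uv=(1)$, consistent with Lemma~\ref{lem:multiplicity2}), one finds $\lambda vu' + (I-vu')A = I$, which satisfies the power identity but is not conjugate to $A$. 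What the paper's single computation buys in exchange is the explicit fact $Z^k=Q^{-1}A^kQ$ --- the powers are conjugate via the same $Q$ --- and one fewer induction; your version keeps the two conclusions, and the hypotheses each rests on, cleanly separated.
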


\begin{proof}
Let $X=v(u'-u)$ so that $Q=I+X$ and $Q^{-1}=I-X$.
As in the proof of Lemma~\ref{lem:conjugateQ}, let $Z=Q^{-1}AQ$. 
We have
\[
AX 
=Av (u'-u)
=\lambda v (u'-u)
=\lambda X,
\]
and
\[
	XX =v (u'-u) v (u'-u)= v (u'v-uv) (u'-u) = (0).
\]
Then, for every integer $k\geq0$,
\begin{align*}
Z^k &= Q^{-1}A^kQ,\\
&=(I-X) A^k (I+X),\\
&= A^k + A^kX - XA^k - XA^kX,\\
&= A^k + \lambda^k X - XA^k - \lambda^k XX,\\
&= A^k + X (\lambda^k I - A^k), \\
&= A^k + v (u'-u) (\lambda^kI - A^k),\\
&= A^k + vu'(\lambda^kI - A^k) - vu(\lambda^kI - A^k),\\
&=    \lambda^k vu' + (I-vu')A^k. \qedhere
\end{align*}
\end{proof}

% sage: L = [11, -18, -10, 19, -24, -22, 22, -26, -11]
% sage: M = matrix(3, L)
% sage: M = Matrice(M)
% sage: G = M.stochastisize().transpose().plot_matrix_power()
% sage: G.save('power_11_18_10.png')
%\includegraphics[width=.8\linewidth]{power_11_18_10.png}

% sage: L = [24, -16, 26, 10, -12, 26, 29, -29, 25]
% sage: M = matrix(3, L)
% sage: M = Matrice(M)
% sage: G = M.stochastisize().transpose().plot_matrix_power()
% sage: G.save('power_24_16_26.png')
%\includegraphics[width=.8\linewidth]{power_24_16_26.png}

We are now ready to state the main theorem.  In the hypothesis, we assume that
the right eigenvector $v$ is positive. This assumption can be relaxed to the
fact that $v$ has nonzero entries and this 
is done in Corollary~\ref{cor:flip-primitive}.
Recall that the column vector $(1, \dots,
1)^\top\in\R^{n\times 1}$ is denoted by $\UN$.

\begin{theorem}\label{thm:simpledominantsto}
Let $A\in\R^{n\times n}$ be a matrix 
with positive right eigenvector $v$ and real left
eigenvector~$u$ associated to the eigenvalue $\lambda$ chosen such that
$\UN^\top v=(1)$ and $uv=(1)$.
The conditions below are equivalent.
\begin{enumerate}[\rm (i)]
\item $\lambda$ is a positive, simple and dominant eigenvalue of $A$.
\item $\lim_{k\to\infty} (\frac{1}{\lambda}A)^k$ converges to the matrix $vu$.
\item For all positive row vector $u'$ with $u'v=(1)$, 
    the matrix $\lambda vu' + A - vu'A$ is eventually positive.
\item The matrix $\lambda v\UN^\top + A - v\UN^\top A$ 
    is eventually positive.
\item $A$ is conjugate to an eventually positive matrix $Z$ such that
    $Zv=\lambda v$.
\item
There exists an integer $k\geq 1$ such that
for all $i$ and $j$ with $1\leq i,j\leq n$, the entry
$a_{ij}^{(k)}$ of $A^k$ is strictly larger than a certain value involving the
sum of the entries of the same column:
\[
a_{ij}^{(k)} > v_i \left(\sum_{\ell=1}^na_{\ell j}^{(k)} - \lambda^k\right).
\]
\end{enumerate}
\end{theorem}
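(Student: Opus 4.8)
The plan is to prove the six conditions equivalent in one cycle, organised around the conjugate matrix
\[
Z_{u'} := \lambda vu' + A - vu'A = \lambda vu' + (I-vu')A .
\]
By Lemma~\ref{lem:theconjugateZ} this matrix is conjugate to $A$, it satisfies $Z_{u'}^k = \lambda^k vu' + (I-vu')A^k$ for every $k\geq 0$, and $Z_{u'}v=\lambda v$ by Lemma~\ref{lem:conjugateQ}. Two remarks collapse the list onto this object. First, condition (iv) is exactly (iii) specialised to $u'=\UN^\top$, which is a positive row vector with $\UN^\top v=(1)$ by hypothesis, so (iii)$\Rightarrow$(iv) is a mere specialisation. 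Second, writing out the $(i,j)$ entry of $\lambda^k v\UN^\top + A^k - v\UN^\top A^k$ shows that $(Z_\UN^k)_{ij}>0$ is literally the displayed inequality of (vi); hence (vi) asserts $Z_\UN^k>0$ for \emph{some} $k$, while (iv) asserts $Z_\UN^k>0$ for \emph{all large} $k$.

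I would first handle (i)$\Leftrightarrow$(ii). The forward direction is Lemma~\ref{lem:vuexpfast}, since a positive dominant simple eigenvalue equals the spectral radius. For (ii)$\Rightarrow$(i), convergence of $(\tfrac1\lambda A)^k$ to the nonzero rank-one matrix $vu$ forces $|\lambda|$ to be the spectral radius; Lemma~\ref{lem:semisimple} then makes it semisimple and dominant, and the trace remark gives $\operatorname{tr}(vu)=uv=1$, so the multiplicity is $1$, i.e.\ simplicity. Next, (i)$\Rightarrow$(iii): using (ii) and dividing the power formula by $\lambda^k>0$,
\[
\tfrac{1}{\lambda^k}Z_{u'}^k = vu' + (I-vu')\tfrac{1}{\lambda^k}A^k \longrightarrow vu' + (I-vu')vu = vu',
\]
because $(I-vu')vu = vu - v(u'v)u = 0$ as $u'v=(1)$. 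Since $v>0$ and $u'>0$, the limit $vu'$ is a positive matrix, so $Z_{u'}^k>0$ for all large $k$, which is (iii). Specialising gives (iii)$\Rightarrow$(iv); taking $Z=Z_\UN$ gives (iv)$\Rightarrow$(v) (conjugate to $A$, eventually positive, and $Z_\UN v=\lambda v$); and keeping a single positive power gives (iv)$\Rightarrow$(vi).

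It remains to recover (i). I would route this through genuine eventual positivity: from (v), $Z^k>0$ and $Z^{k+1}>0$ for large $k$ are positive, hence primitive, and $v>0$ is a right eigenvector of each with eigenvalues $\lambda^k,\lambda^{k+1}$. By Perron's theorem a positive eigenvector of a primitive matrix can only belong to its spectral radius, so $\lambda^k=\rho(Z^k)$ and $\lambda^{k+1}=\rho(Z^{k+1})$ are positive; dividing yields $\lambda=\rho(Z^{k+1})/\rho(Z^k)>0$, and since the eigenvalues of a power are the powers of the eigenvalues, simplicity and dominance of $\lambda^k$ transfer to $\lambda$. As $A$ is conjugate to $Z$, this is exactly (i).

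The main obstacle is precisely this last recovery, and specifically the \emph{sign} of $\lambda$. Convergence (ii) and a single positive power (vi) determine $\lambda$ only up to sign: one can build a symmetric $Z$ with $Z^2>0$ whose positive Perron vector $v$ of $Z^2$ satisfies $Zv=-v$ and which is not eventually positive, so neither the limit $vu$ nor one lucky power of $Z_\UN$ forces $\lambda>0$. The positivity of $\lambda$ is pinned down only by two \emph{consecutive} positive powers $Z^k>0,\,Z^{k+1}>0$, which is where the characterisation of eventually positive matrices is indispensable. I would therefore take care that the cycle closes through eventual positivity (condition (v), equivalently (vi) read for consecutive $k$) before asserting (i), rather than attempting to deduce (i) from an isolated positive power or from mere convergence.
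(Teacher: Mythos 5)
Your architecture is the paper's own: the same conjugate matrix $Z_{u'}=\lambda vu'+(I-vu')A$, the same use of Lemmas~\ref{lem:vuexpfast} and~\ref{lem:theconjugateZ}, the specialisation $u'=\UN^\top$ to pass from (iii) to (iv), and Perron's theorem applied to a positive power of $Z$ to return to (i). Two of your refinements actually improve on the printed proof. First, the limit computation $\lambda^{-k}Z_{u'}^k\to vu'>0$ replaces the paper's explicit $\varepsilon$-estimates; both are valid, but both need the factor $\lambda^k$ to be positive, and you correctly take $\lambda>0$ from (i) (proving (i)$\Rightarrow$(iii)), whereas the paper labels this step ``(ii)$\Rightarrow$(iii)'' and uses $\lambda^k>0$ silently. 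Second, your recovery of (i) from (v) through two \emph{consecutive} positive powers $Z^k>0$, $Z^{k+1}>0$ is more careful than the paper's argument, which invokes a single positive power and by itself cannot exclude $\lambda<0$ when $k$ is even.

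The gap is the contradiction between your first paragraph and your last. The closing observation is correct, and you should carry it to its conclusion: it refutes your opening claim (ii)$\Rightarrow$(i), and with it the theorem as printed. Concretely, take $A=\left(\begin{smallmatrix}-1&-1\\-1&-1\end{smallmatrix}\right)$, $v=(\tfrac12,\tfrac12)^\top$, $u=(1,1)$, $\lambda=-2$: all hypotheses hold; $\tfrac1\lambda A=vu$ is idempotent, so (ii) holds; $\lambda v\UN^\top+(I-v\UN^\top)A=A$ and $A^2>0$, so (vi) holds; yet (i) fails ($\lambda<0$), and (iii), (iv), (v) fail too ($A$ is not eventually positive, and an eventually positive conjugate would force $2=\rho(A)$ to be an eigenvalue of $A$, which it is not). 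Your (ii)$\Rightarrow$(i) argument breaks exactly on the sign: Lemma~\ref{lem:semisimple} concerns $\rho^{-k}A^k$ with $\rho=|\lambda|$, and this equals $\lambda^{-k}A^k$ only when $\lambda>0$ --- in the example $\rho^{-k}A^k$ diverges while $\lambda^{-k}A^k$ converges. The paper carries the same flaw, hidden in the final step $Z_{u'}^k=\lambda^k(vu'+F)>0$ of its ``(ii)$\Rightarrow$(iii)'' and in its ``(iv)$\Leftrightarrow$(vi)'', which equates one positive power of $Z_{\UN}$ with eventual positivity. So what your cycle genuinely proves is the equivalence of (i), (iii), (iv), (v), together with (i)$\Rightarrow$(ii) and (i)$\Rightarrow$(vi); the converses from (ii) and (vi) are false as stated and become true only if $\lambda>0$ is added to those conditions (or to the hypotheses), or if (vi) is strengthened to two consecutive exponents $k$ and $k+1$. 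Delete the (ii)$\Rightarrow$(i) paragraph and state that correction explicitly; as it stands, your proposal asserts an implication that your own counterexample disproves.
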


\begin{proof}
(i)$\implies$(ii).
From Lemma~\ref{lem:vuexpfast}, $\lambda^{-k}A^k$ converges to the matrix $vu$.

(ii)$\implies$(iii).
Let $u'$ be a positive row vector such that $u'v=(1)$.
For the purpose of the proof, we denote by $x_i$ the $i^{th}$ entry of a vector
$x$. By $|x_i|$, we denote the usual absolute value, and by $|x|_{\infty}$ the
maximum norm of a vector.
Let $\varepsilon$ be such that 
\[
    0<\varepsilon<\frac{\min\{|v_i|\}\min\{|u'_i|\}}{1 + n|v|_\infty |u'|_\infty }.
\]
Since $\lambda^{-k}A^k$ converges to the matrix $vu$,
there exists an integer $k$ such that the entries of
$E=(E_{ij})_{n\times n} = \lambda^{-k}A^k - v u$ are less than $\varepsilon$ in absolute
value. 
Let $F=(F_{ij})_{n\times n} = \left(I-vu'\right)E$. We have 
$
F_{ij}= E_{ij} - v_i (u'_1E_{1j} + u'_2E_{2j} + \cdots + u'_nE_{nj})
$
so that
\[
|F_{ij}| \leq |E_{ij}| + |v_i| \left(|u'_1E_{1j}| + |u'_2E_{2j}| + \cdots + |u'_nE_{nj}|\right)
          < \varepsilon + |v|_\infty |u'|_\infty (n\varepsilon) 
	  <  {\min\{|v_i|\}\min\{|u'_i|\}}.
\]
Therefore $vu'+F>0$.
Note that $A^k=\lambda^k\left(v u + E\right)$.
On the other hand, we have
$\left(I - vu'\right)v u = 
v u - vu'v u=
v u - v u = 0$.
Therefore,
\begin{align*}
(\lambda vu' + (I-vu')A)^k
&=
\lambda^k vu' + (I-vu')A^k, & \text{(from Lemma~\ref{lem:theconjugateZ})}\\
&=
\lambda^kvu' + \left(I - vu'\right)
\lambda^k\left(v u + E\right),\\
&=\lambda^k(vu' + F) > 0.
\end{align*}

(iii)$\implies$(iv). From the substitution $u'=\UN^\top$ and the fact that
$\UN^\top v=(1)$.

(iv)$\implies$(v).
From Lemma~\ref{lem:theconjugateZ}, we have
that $A$ is conjugate to
$Z=\lambda v\UN^\top + A - v\UN^\top A$.
Moreover,
$Zv=\lambda v\UN^\top v + Av - v\UN^\top Av
=\lambda v + \lambda v - \lambda v=\lambda v$.

(v)$\implies$(i).
Suppose that $A$ is conjugate to $Z$ and
there exists some integer $k$ such that $Z^k>0$.
From Perron's theorem,
the spectral radius of $Z^k$
is a simple and dominant eigenvalue of $Z^k$ with associated positive eigenvectors.
Since $Z^kv=\lambda^kv$ and $v$ is positive, then the spectral
radius of $Z^k$ must be $\lambda^k$.
Therefore $\lambda^k$ is a simple root of the characteristic polynomial of
$A^k$ whose modulus is strictly greater than that of any other eigenvalue.
Then $\lambda$ is a positive, simple and dominant eigenvalue of $A$.

(iv)$\iff$(vi).
There exists a positive
integer $k$ such that 
$\lambda v\UN^\top + (I-v\UN^\top)A$ is eventually positive if and only if
$\lambda^k v\UN^\top + (I-v\UN^\top)A^k > 0$ if and only if
\[
A^k 
> v\UN^\top A^k - \lambda^k v\UN^\top 
= v\left(\UN^\top A^k - \lambda^k \UN^\top \right),
\]
which holds if and only if
\[
a_{ij}^{(k)} > v_i \left(\sum_{\ell=1}^na_{\ell j}^{(k)} -
\lambda^k\right),
\]
for all $i$ and $j$ with $1\leq i,j\leq n$.\qedhere
\end{proof}

\begin{example}
% sage: A = matrix(2,[-11,14,-26,29])
Let $A = \left(\begin{array}{rr}
	-11 & 14 \\
	-26 & 29
\end{array}\right)$
for which 
$v=\left(\frac{7}{20},\,\frac{13}{20}\right)^\top$ is a positive right
eigenvector and
$u=\left(\frac{-20}{6},\,\frac{20}{6}\right)$ is a left eigenvector
associated to eigenvalue $\lambda=15$.
We verify that
$\UN^\top v=(1)$ and $uv=(1)$.
We compute that
\[
\lambda v\UN^\top + A - v\UN^\top A
=
15
\cdot
\frac{1}{20}
\left(\begin{array}{rr}
	7  & 7  \\
	13 & 13
\end{array}\right)
+
A
-
\frac{1}{20}
\left(\begin{array}{rr}
	7  & 7  \\
	13 & 13
\end{array}\right)
A = 
\frac{1}{5}
\left(\begin{array}{rr}
	36 & 21 \\
	39 & 54
\end{array}\right)
\]
is positive. Using Theorem~\ref{thm:simpledominantsto}, we conclude $15$ is a
simple and dominant eigenvalue of $A$.
\end{example}

The next example illustrates that the entries of $u$ can be zero.

\begin{example}
Let $A = \left(\begin{array}{rr}
	0 & 1 \\
	0 & 1
\end{array}\right)$
for which 
$v=\left(\frac{1}{2},\,\frac{1}{2}\right)^\top$ is a positive right
eigenvector
and
$u=\left(0, 2\right)$ is a left eigenvector
associated to eigenvalue $\lambda=1$.
We verify that
$\UN^\top v=(1)$ and $uv=(1)$.
We compute that
\[
\lambda v\UN^\top + A - v\UN^\top A
=
\frac{1}{2}
\left(\begin{array}{rr}
	1 & 1   \\
	1 & 1
\end{array}\right)
+
A
-
\frac{1}{2}
\left(\begin{array}{rr}
	1 & 1  \\
	1 & 1
\end{array}\right)
A = 
\frac{1}{2}
\left(\begin{array}{rr}
	1 & 1  \\
	1 & 1
\end{array}\right)
\]
is positive. Using Theorem~\ref{thm:simpledominantsto}, we conclude that $1$ is a
simple and dominant eigenvalue of $A$.
\end{example}

In the case where $\UN$ is a right eigenvector associated to eigenvalue $1$,
the criteria can be declined in terms of the average of the entries of the
same column.

\begin{corollary}\label{cor:when1isrighteigv}
Let $A\in\R^{n\times n}$ with positive right eigenvector $\UN$ and real left
eigenvector $u$ associated to the eigenvalue $1$ chosen such that
$u\UN=(1)$.
The following conditions are equivalent.
\begin{enumerate}[\rm (i)]
\item $1$ is a simple and dominant eigenvalue of $A$.
\item $\lim_{k\to\infty} A^k$ converges to the matrix
    $\frac{1}{n}\UN u$.
\item For all positive row vector $u'$ with $u'\frac{1}{n}\UN =(1)$, 
    the matrix $\frac{1}{n}\UN u' + A -\frac{1}{n}\UN u'A$ is eventually positive.
\item The matrix $\frac{1}{n}\UN \UN^\top + A - \frac{1}{n}\UN \UN^\top A$ is eventually positive.
\item $A$ is conjugate to an eventually positive matrix $Z$ such that
    $Z\UN=\UN$.
\item
There exists an integer $k\geq 1$ such that
for all $i$ and $j$ with $1\leq i,j\leq n$, the entry
$a_{ij}^{(k)}$ of $A^k$ is strictly larger than 
the average of the entries of the same column minus $\frac{1}{n}$:
\[
    a_{ij}^{(k)} > \frac{1}{n} \sum_{\ell=1}^na_{\ell j}^{(k)} - \frac{1}{n}.
\]
\end{enumerate}
\end{corollary}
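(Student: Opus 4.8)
The plan is to obtain this corollary as the special case of Theorem~\ref{thm:simpledominantsto} in which the positive right eigenvector is taken to be $v = \frac{1}{n}\UN$ and the eigenvalue is $\lambda = 1$. First I would verify that this choice is admissible: from $A\UN = \UN$ it follows that $\frac{1}{n}\UN$ is again a positive right eigenvector for the eigenvalue $1$, and it satisfies the normalization $\UN^\top v = \frac{1}{n}\UN^\top\UN = (1)$ demanded by the theorem. The left eigenvector is then fixed by the pairing $uv = (1)$; since $v = \frac{1}{n}\UN$, I would keep track of the corresponding scaling of $u$, as this is the only place where the normalization of the left eigenvector enters the argument.

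Granting the hypotheses, each of the six conditions of Theorem~\ref{thm:simpledominantsto} specializes termwise, so the proof is a sequence of direct substitutions. Condition~(i) is unchanged, the positivity clause $\lambda > 0$ being automatic for $\lambda = 1$. In~(ii) the limit $vu$ becomes $\frac{1}{n}\UN u$ once $v = \frac{1}{n}\UN$ is inserted. In~(iii) the constraint $u'v = (1)$ reads $u'\frac{1}{n}\UN = (1)$ and the matrix $\lambda vu' + A - vu'A$ becomes $\frac{1}{n}\UN u' + A - \frac{1}{n}\UN u'A$; condition~(iv) is its instance $u' = \UN^\top$, and~(v) becomes $Z\UN = \UN$ since $Zv = \lambda v$ with $v = \frac{1}{n}\UN$ and $\lambda = 1$. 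Finally in~(vi) I would put $v_i = \frac{1}{n}$ and $\lambda^k = 1$ into the inequality $a_{ij}^{(k)} > v_i\bigl(\sum_{\ell=1}^n a_{\ell j}^{(k)} - \lambda^k\bigr)$, which collapses exactly to the comparison with the column average minus $\frac{1}{n}$.

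I expect no substantial obstacle, since all the mathematical content is already carried by Theorem~\ref{thm:simpledominantsto}; the corollary merely records the particularly clean form that the criterion takes when $\UN$ is the right eigenvector. The one point requiring care is the consistent bookkeeping of the normalization of the left eigenvector $u$ between the pairing $uv = (1)$ used in the theorem and the pairing $u\UN = (1)$ appearing in the corollary's hypothesis, and in particular making sure that this rescaling is tracked correctly through condition~(ii), where it affects the precise scalar in the limit matrix.
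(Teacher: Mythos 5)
Your proposal is correct and is essentially identical to the paper's proof, which reads in full: ``Substituting $v=\frac{1}{n}\UN$ and $\lambda=1$ in Theorem~\ref{thm:simpledominantsto}.'' The normalization point you flag is, however, worth carrying to the end: the theorem demands $uv=(1)$ with $v=\frac{1}{n}\UN$, so under the corollary's hypothesis $u\UN=(1)$ one must feed the theorem the rescaled left eigenvector $nu$, and condition (ii) then specializes to $\lim_{k\to\infty} A^k = v\,(nu)=\UN u$ rather than $\frac{1}{n}\UN u$ (check $A=\frac{1}{2}\UN\UN^\top$, $u=(\frac{1}{2},\frac{1}{2})$: the limit is $A$ itself, which equals $\UN u$, not $\frac{1}{n}\UN u$). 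So the factor-of-$n$ discrepancy you sensed is real, but it sits in the published statement of item (ii) — which implicitly uses the theorem's normalization $u\,\frac{1}{n}\UN=(1)$ rather than the corollary's stated $u\UN=(1)$ — and not in your argument, which otherwise reproduces the paper's substitution verbatim.
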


\begin{proof}
	Substituting $v=\frac{1}{n}\UN$ and $\lambda=1$ in
	Theorem~\ref{thm:simpledominantsto}.
\end{proof}

\begin{example}
% sage: A = matrix(2,[-11,26,-14,29])/15
% sage: corollary_16_criterion(A)
% [3/5 2/5]
% [2/5 3/5]
Let $A = \frac{1}{15}\left(\begin{array}{rr}
	-11 & 26 \\
	-14 & 29
\end{array}\right)$
for which $\UN$ is a right eigenvector 
and $u=\frac{1}{6}(-7, 13)$ is a left eigenvector
associated to eigenvalue
$\lambda=1$. 
We verify that $u\UN=(1)$.
We compute that
\[
\frac{1}{2}\UN\UN^\top + A - \frac{1}{2}\UN\UN^\top A
=
\frac{1}{2} 
\left(\begin{array}{rr}
	1 & 1 \\
	1 & 1
\end{array}\right)
+
A
-
\frac{1}{2} 
\left(\begin{array}{rr}
	1 & 1 \\
	1 & 1
\end{array}\right)A = 
\frac{1}{5}
\left(\begin{array}{rr}
	3 & 2 \\
	2 & 3
\end{array}\right)
\]
is positive. 
Using Corollary~\ref{cor:when1isrighteigv}, we conclude that $1$ is a simple
and dominant eigenvalue of $A$.
\end{example}

We illustrate when the corollary is not satisfied with a nonexample.
\begin{example}
% sage: A = matrix(2,[-11,14,-26,29])/3
% sage: corollary_16_criterion(A)
% [ 3 -2]
% [-2  3]
Let $A=
\frac{1}{3}\left(\begin{array}{rr}
	-11 & 14 \\
	-26 & 29
\end{array}\right)$
for which
$\UN$ is a right eigenvector
and $u=\frac{1}{6}(13, -7)$ is a left eigenvector
associated to eigenvalue
$\lambda=1$. 
We verify that $u\UN=(1)$.
The criteria is not satisfied when $k=1$ since
$
d_{21} 
= \frac{-26}{3}
\not>
\frac{-20}{3}
=
\frac{1}{2}\left(\frac{-11}{3}+\frac{-26}{3}\right) - \frac{1}{2}
=
\frac{1}{2}\left(d_{11}+d_{21}\right) - \frac{1}{2}$.
Also the matrix
\[
\frac{1}{2}\UN\UN^\top + A - \frac{1}{2}\UN\UN^\top A
=
\frac{1}{2} 
\left(\begin{array}{rr}
	1 & 1 \\
	1 & 1
\end{array}\right)
+
A
-
\frac{1}{2} 
\left(\begin{array}{rr}
	1 & 1 \\
	1 & 1
\end{array}\right)A = 
\left(\begin{array}{rr}
	3 & -2 \\
	-2 & 3
\end{array}\right)
\]
is not positive and neither of its powers.  From
Corollary~\ref{cor:when1isrighteigv}, we conclude that $1$ is not a simple and
dominant eigenvalue of $A$. Indeed, $5$ is another eigenvalue of $A$.
\end{example}

Now we adapt the theorem to the nonnegative case.  As it is the case for
Theorem~\ref{thm:gantmacher}, there is no equivalence possible.

\begin{proposition}\label{prop:nonnegcase}
Let $A\in\R^{n\times n}$ with positive right eigenvector $v$ and real left
eigenvector $u$ associated to the eigenvalue $\lambda$ chosen such that
$\UN^\top v=(1)$ and $uv=(1)$.
If the matrix $\lambda v\UN^\top + A - v\UN^\top A$ is nonnegative,
then the spectral radius of $A$ is a nonnegative eigenvalue which is greater
than or equal to the modulus of any other eigenvalue.
\end{proposition}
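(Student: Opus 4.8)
The plan is to mimic the chain (iv)$\implies$(v)$\implies$(i) from the proof of Theorem~\ref{thm:simpledominantsto}, but with Perron's theorem replaced by the weaker Theorem~\ref{thm:gantmacher}, which is precisely the tool adapted to nonnegative (rather than eventually positive) matrices. First I would set $u'=\UN^\top$ in Lemma~\ref{lem:theconjugateZ}; since $\UN^\top v=(1)$, this produces the matrix $Z=\lambda v\UN^\top + A - v\UN^\top A$, which is conjugate to $A$ and satisfies $Zv=\lambda v$ by the very computation carried out in the (iv)$\implies$(v) step. By hypothesis $Z$ is nonnegative, so $A$ is similar to a nonnegative matrix for which the positive vector $v$ is a right eigenvector with eigenvalue $\lambda$. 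Note that $\lambda$ is automatically real, since $A$ is real and $v$ is a real nonzero vector.

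Next I would extract a nonnegative \emph{left} eigenvector for the maximal eigenvalue of $Z$. Applying Theorem~\ref{thm:gantmacher} to the nonnegative matrix $Z^\top$ yields an eigenvalue $\mu\geq 0$ of $Z^\top$, hence of $Z$, with $|\nu|\leq\mu$ for every eigenvalue $\nu$ of $Z$, together with a nonnegative nonzero right eigenvector of $Z^\top$; transposing it gives a nonnegative nonzero row vector $w$ with $wZ=\mu w$.

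The key identification is then $\mu=\lambda$. Computing $wZv$ in two ways gives $\mu\,(wv)=wZv=\lambda\,(wv)$. Since $v>0$ and $w\geq 0$ with $w\neq 0$, the scalar $wv=\sum_i w_iv_i$ is strictly positive, forcing $\mu=\lambda$. Finally, because $Z$ is conjugate to $A$ they share the same characteristic polynomial, hence the same eigenvalues and the same spectral radius; thus $\lambda=\mu\geq 0$ is a (real, nonnegative) eigenvalue of $A$ satisfying $|\nu|\leq\lambda$ for every eigenvalue $\nu$ of $A$, which is exactly the assertion that the spectral radius of $A$ is this nonnegative eigenvalue.

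I expect the only delicate point to be the passage to a nonnegative left eigenvector: Theorem~\ref{thm:gantmacher} is stated for right eigenvectors only, so one must apply it to $Z^\top$ and transpose, and then verify $wv\neq 0$ in order to conclude $\mu=\lambda$. Everything else is a direct specialization of the conjugacy computations already established for Theorem~\ref{thm:simpledominantsto}. It is worth remarking that neither simplicity nor strict dominance can be recovered in this setting, precisely because Theorem~\ref{thm:gantmacher}, unlike Perron's theorem, guarantees neither; this is the reason the proposition can only be a one-directional implication rather than an equivalence.
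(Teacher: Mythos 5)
Your proof is correct and rests on the same two pillars as the paper's own argument: Lemma~\ref{lem:theconjugateZ} to see that $Z=\lambda v\UN^\top + A - v\UN^\top A$ is conjugate to $A$, and Theorem~\ref{thm:gantmacher} applied to the nonnegative matrix $Z$. The paper stops there, because similarity already transfers Gantmacher's conclusion to $A$: the stated proposition only claims that the spectral radius of $A$ is an eigenvalue weakly dominating the others, and it never asserts that this eigenvalue is the $\lambda$ of the hypothesis. Your middle paragraph --- applying Theorem~\ref{thm:gantmacher} to $Z^\top$ to get a nonnegative left eigenvector $w$, and then forcing $\mu=\lambda$ from $\mu\,(wv)=wZv=\lambda\,(wv)$ with $wv>0$ --- is therefore extra work, but it is sound and buys a genuinely stronger conclusion: the spectral radius of $A$ is not merely some nonnegative eigenvalue, it is precisely the eigenvalue $\lambda$ carrying the positive right eigenvector $v$. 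This refinement parallels the step (v)$\implies$(i) of Theorem~\ref{thm:simpledominantsto}, where positivity of $v$ is likewise used to pin down the spectral radius; your only delicate point, $wv\neq 0$, is handled correctly.
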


\begin{proof}
    From Lemma~\ref{lem:theconjugateZ},
    $A$ is similar to $Z=\lambda v\UN^\top + A - v\UN^\top A$.
    If $Z$ is nonnegative, then from
    Theorem~\ref{thm:gantmacher},
    $Z$ has a nonnegative eigenvalue $\lambda$
    such that the moduli of all other eigenvalues do not exceed $\lambda$. 
\end{proof}

We illustrate the proposition on an example.
\begin{example}
% sage: A = matrix(2,[-4,5,-3,4])
% sage: corollary_16_criterion(C)
% [0 1]
% [1 0]
Let $A=
\left(\begin{array}{rr}
	-4 & 5 \\
	-3 & 4
\end{array}\right)$
for which
$\UN$ is a right eigenvector
and $u=\frac{1}{2}(-3, 5)$ is a left eigenvector
associated to eigenvalue
$\lambda=1$. 
We verify that $u\UN=(1)$.
We compute that
\[
\frac{1}{2}\UN\UN^\top + A - \frac{1}{2}\UN\UN^\top A
=
\frac{1}{2} 
\left(\begin{array}{rr}
	1 & 1 \\
	1 & 1
\end{array}\right)
+
A
-
\frac{1}{2} 
\left(\begin{array}{rr}
	1 & 1 \\
	1 & 1
\end{array}\right)A = 
\left(\begin{array}{rr}
	0 & 1 \\
	1 & 0
\end{array}\right)
\]
is nonnegative. 
Using Proposition~\ref{prop:nonnegcase}
we conclude that $1$ is an
eigenvalue of $A$ greater than or equal to the modulus of any other eigenvalue.
Indeed, $1$ and $-1$ are 
eigenvalues of $A$ as the characteristic
polynomial of $A$ is $\chi_A(\lambda)=(\lambda-1)(\lambda+1)$.
\end{example}

Using a signature matrix, we can adapt the theorem to the case where the
right eigenvector $v$ is not positive with nonzero entries.

\begin{corollary}\label{cor:flip-primitive}
Let $A\in\R^{n\times n}$ with right eigenvector $v$ and left
eigenvector $u$ associated to the eigenvalue $\lambda$ chosen such that
$\UN^\top Sv=(1)$ and $uv=(1)$ where $S$ is a signature matrix such that $Sv$
is positive.
The following conditions are equivalent.
\begin{enumerate}[\rm (i)]
\item $\lambda$ is a positive, simple, dominant eigenvalue of $A$.
\item $\lim_{k\to\infty} (\frac{1}{\lambda}A)^k$ converges to the matrix $vu$.
\item For all positive row vector $u'$ with $u'Sv=(1)$, 
    the matrix $\lambda Svu' + SAS - Svu'SAS$ is eventually positive.
\item The matrix $\lambda Sv\UN^\top + SAS - Sv\UN^\top SAS$ 
    is eventually positive.
\item $A$ is conjugate to an eventually positive matrix $Z$ such that
    $ZSv=\lambda Sv$.
\end{enumerate}
\end{corollary}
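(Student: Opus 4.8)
The plan is to reduce everything to Theorem~\ref{thm:simpledominantsto} applied to the similar matrix $B=SAS$. Since $S$ is a signature matrix, $S^2=I$, so $B$ is conjugate to $A$ and the two share the same characteristic polynomial; in particular $\lambda$ is a positive, simple, and dominant eigenvalue of $A$ if and only if it is one of $B$. First I would use Lemma~\ref{lem:flip-iff-right} to identify the relevant eigenvectors of $B$: from $Av=\lambda v$ we obtain $B(Sv)=\lambda(Sv)$, and from $uA=\lambda u$ we obtain $(uS)B=\lambda(uS)$. Thus $\tilde v:=Sv$ is a right eigenvector and $\tilde u:=uS$ is a left eigenvector of $B$ associated to $\lambda$.

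Next I would verify that $B$, $\tilde v$, and $\tilde u$ satisfy the hypotheses of Theorem~\ref{thm:simpledominantsto}. By assumption $\tilde v=Sv$ is positive and $\UN^\top\tilde v=\UN^\top Sv=(1)$, while $\tilde u\tilde v=(uS)(Sv)=uS^2v=uv=(1)$ because $S^2=I$. Hence the theorem applies verbatim to $B$ with right eigenvector $\tilde v$ and left eigenvector $\tilde u$, yielding the equivalence of its conditions~(i)--(v) for $B$.

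It then remains to translate each condition for $B$ into the corresponding condition for $A$. Conditions~(i) and (v) transfer immediately: $A$ and $B$ are conjugate, and for (v) transitivity of conjugacy gives that $A$ is conjugate to $Z$, while $Z\tilde v=\lambda\tilde v$ reads $ZSv=\lambda Sv$. For (iii) and (iv) the passage is purely formal: substituting $B=SAS$ and $\tilde v=Sv$ into $\lambda\tilde vu'+B-\tilde vu'B$ produces exactly $\lambda Svu'+SAS-Svu'SAS$, and the choice $u'=\UN^\top$ gives (iv). The only identity worth writing out is in condition~(ii): since $S^2=I$ one has $(\tfrac{1}{\lambda}B)^k=S(\tfrac{1}{\lambda}A)^kS$, and the candidate limit is $\tilde v\tilde u=(Sv)(uS)=S(vu)S$; conjugating the statement $S(\tfrac{1}{\lambda}A)^kS\to S(vu)S$ by $S$ on both sides (multiplication by the fixed matrix $S$ being continuous) shows it is equivalent to $(\tfrac{1}{\lambda}A)^k\to vu$.

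I do not expect any genuine obstacle here, as the whole argument is a dictionary between $A$ and its signature-conjugate $B=SAS$. The only place requiring a moment's care is bookkeeping the factors of $S$ in the convergence statement of~(ii), where one must remember to strip the outer $S$'s from both the iterate and its limit.
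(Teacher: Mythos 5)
Your proposal is correct and follows essentially the same route as the paper: apply Lemma~\ref{lem:flip-iff-right} to identify $Sv$ and $uS$ as the right and left eigenvectors of $B=SAS$, check $(uS)(Sv)=uv=(1)$ and the positivity of $Sv$ so that Theorem~\ref{thm:simpledominantsto} applies to $B$, and then translate each of conditions (i)--(v) back to $A$. Your explicit handling of the $S$-factors in condition (ii) is in fact slightly more careful than the paper's one-line statement of the same fact.
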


\begin{proof}
    From Lemma~\ref{lem:flip-iff-right}, $Sv$ is a right eigenvector and $uS$
    is a left eigenvector of the matrix $B=SAS$ associated to eigenvalue
    $\lambda$. Since $(uS)(Sv)=uv=(1)$ and $Sv$ is positive,
    the hypotheses of Theorem~\ref{thm:simpledominantsto} are satisfied.
Then (i) $\lambda$ is a positive, simple and dominant eigenvalue of $B$
therefore also of $A$.
Then (ii) $\lim_{k\to\infty} (\frac{1}{\lambda}B)^k=S(\lim_{k\to\infty}
(\frac{1}{\lambda}A)^k)S$ converges to the matrix $SvuS$.
Then (iii) for all positive row vector $u'$ with $u'Sv=(1)$, 
    the matrix $\lambda Svu' + SAS - Svu'SAS$ is eventually positive.
Then (iv) the matrix $\lambda Sv\UN^\top + SAS - Sv\UN^\top SAS$ 
    is eventually positive.
Then (v) $SAS$ is conjugate to an eventually positive matrix $Z$ such that
    $ZSv=\lambda Sv$.
\end{proof}

\section{Application to Coxeter groups}\label{sec:coxeter}

\subsection{Geometric representations of Coxeter systems}
\label{ssec:cox_definition}

A~\defn{Coxeter system} consists of a pair $(W,S)$, where $W$ is a group (with
identity denoted by $e$) generated by a set $S=\{s_1,s_2,\dots,s_n\}$ 
of letters, with $n\in\mathbb{N}$ satisfying the following conditions.

\begin{enumerate}[\rm (i)]
\item $s_i^2=e$ for all $i$ such that $1\leq i \leq n$.
\item $(s_is_j)^{m_{i,j}}=e$, with $m_{i,j}\geq 2$ or $m_{i,j}=\infty$.
\end{enumerate}

Among others, the following books give a general background on Coxeter groups and their related structures: \cite{bourbaki_elements_1968,humphreys_reflection_1992,bjorner_combinatorics_2005}.
An element $w\in W$ is represented as a word in the alphabet $S$.
A word representing an element $w$ is called \defn{reduced} if it is shortest with this
property.
Given a Coxeter system, we represent it as a group of linear transformations of a vector space stabilizing a bilinear form as follows.

Let $V$ be a real vector space with basis $\Delta=\{\alpha_s|s\in S\}$.
Define a symmetric bilinear form $\mathcal{B}$ on the basis $\Delta$ by the matrix
\[
B = \left[b_{i,j}= \begin{cases} -\cos(\pi/m_{i,j}), \text{ if } m_{i,j}<\infty
\\ -c_{i,j}, \text{ if } m_{i,j}=\infty\end{cases}\right]_{1\leq i,j\leq n},
\]
where $c_{i,j}\geq 1$.
If all $c_{i,j}$ are equal to $1$, we recover the ``classical'' bilinear form
which is canonical.
The \defn{signature} of $B$ is $(p,q,r)$, where $p$ is the number of positive
eigenvalues of $B$, $q$ the number of negative eigenvalues of $B$ and $r$ the
dimension of the kernel of $B$.
If $B$ is positive-definite, we say that $(W,S)$ is of \defn{finite} type (notice that
there is a unique bilinear form in this case).
If $B$ is positive-semidefinite, we say that $(W,S)$ with the associated
bilinear form is of \defn{affine} type.
If the associate space $V$ cannot be partitionned into 2 proper subspaces
orthogonal with respect to~$\mathcal{B}$, then $(W,S)$ is said to be \defn{irreducible}.
This bilinear form $\mathcal{B}$ allows to define the reflection $\sigma_\alpha$ of~$V$ along a nonisotropic vector $\alpha\in V$ using the formula
\begin{align*}
	\sigma_\alpha: V & \rightarrow V \\
	\lambda & \mapsto \lambda -\frac{\mathcal{B}(\lambda,\alpha)}{\mathcal{B}(\alpha,\alpha)}\alpha.
\end{align*}
Finally, the morphism $\phi: W \rightarrow GL(V)$ sends the generators in $S$ to the reflections $\sigma_{\alpha_s}$.
This morphism is well-defined, injective and its image preserves the bilinear
form $\mathcal{B}$, i.e. $\phi(W)$ is a subgroup of
$O_{\mathcal{B}}(V)=\{\tau\in GL(V) |
\mathcal{B}(\tau(v),\tau(v))=\mathcal{B}(v,v), \text{ for all } v\in V\}$.
A subgroup of $W$ generated by elements in a subset $I$ of $S$ is called
\defn{standard parabolic subgroup}.
A subgroup of~$W$ conjugate to a standard parabolic subgroup is a
\defn{parabolic subgroup}.
The \defn{parabolic closure} of an element $w\in W$ is the smallest parabolic 
subgroup that contains $w$.
Similarly, the \defn{standard parabolic closure} of an element $w\in W$ is the
smallest standard parabolic subgroup $W_I$ with $I\subseteq S$ that contains
$w$.
It follows from the properties of the geometric representation that the columns
of a matrix representing an element of the group are either nonnegative or
nonpositive.
In the case of the classical bilinear form, we refer to the representation as the \defn{classical geometric representation}.
For more details about the classical geometric representation of Coxeter groups, we refer the reader to \cite[Chapter~5]{humphreys_reflection_1992}.
For more details on general geometric representations and precise references to proofs, we refer the reader to \cite[Section~1]{hohlweg_asymptotical_2014} and the references therein.

\subsection{Spectrum of matrices in geometric representations of Coxeter systems}

Investigations on the eigenvalues have been done before in the classical
representation for Coxeter elements, i.e. the products of the generators in $S$
taken in some order: If the Coxeter graph is a forest, A'Campo showed that the
spectrum of the Coxeter element is contained in the union of the unit circle
and the positive real line \cite{acampo_valeurs_1976}.  This allows to show
that $W$ is infinite if and only if a Coxeter element has infinite order.
This was later generalized to all graphs by Howlett
\cite{howlett_coxeter_1982} which studied Coxeter elements using $M$-matrices.
The following result gives a general description of the spectral radius.

\begin{theorem}[{McMullen \cite[Theorem~1.1]{mcmullen_coxeter_2002}}]
	Let $(W,S)$ be a Coxeter system, $\phi$ the classical geometric representation and $w\in W$.
	The spectral radius $\rho_w$ of $\phi(w)$ is either 1 or $\rho_w\geq~\lambda_{\text{Lehmer}}$, where $\lambda_{\text{Lehmer}}\approx 1.1762808$ is Lehmer's number.
\end{theorem}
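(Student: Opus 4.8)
The plan is to reduce to the irreducible infinite nonaffine case, use the fact that $\phi(w)$ is an isometry of $\mathcal{B}$ to force a reciprocal characteristic polynomial, recognize $\rho_w$ as a Salem-type algebraic integer, and finally control its size by Lehmer's number. First I would dispose of the two ways $\rho_w=1$ can occur. If $(W,S)$ is of finite type then $B$ is positive-definite, so $\phi(W)$ is a finite subgroup of the compact orthogonal group $O_{\mathcal{B}}(V)$; every $\phi(w)$ then has finite order, its eigenvalues are roots of unity, and $\rho_w=1$. If $(W,S)$ is of affine type then $B$ is positive-semidefinite with radical, and $\phi(w)$ acts orthogonally on the nondegenerate quotient while acting unipotently along the radical, so again every eigenvalue has modulus $1$ and $\rho_w=1$. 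Decomposing $V$ into $\mathcal{B}$-orthogonal irreducible factors reduces the general statement to a single factor, so I may assume $(W,S)$ is irreducible, infinite, and nonaffine; then $B$ is indefinite and, after passing to the nondegenerate quotient, nondegenerate.

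Next I would record the reciprocal structure. Writing $M=\phi(w)$, the identity $M^\top B M=B$ gives $M^{-1}=B^{-1}M^\top B$, so $M^{-1}$ is conjugate to $M^\top$ and hence has the same spectrum as $M$. The eigenvalue set of $M$ is therefore stable under $\lambda\mapsto\lambda^{-1}$, and in particular $\rho_w>1$ forces both $\rho_w$ and $\rho_w^{-1}$ to be real eigenvalues. In the basis $\Delta$ the generating reflections act by $\phi(s_i)\colon\alpha_j\mapsto\alpha_j-2b_{i,j}\alpha_i$, where the numbers $-2b_{i,j}=2\cos(\pi/m_{i,j})$ are algebraic integers (they are sums $\zeta+\zeta^{-1}$ of roots of unity); consequently $M$ has entries in the ring of integers of a real cyclotomic field, and $\rho_w$ is an algebraic integer all of whose Galois conjugates have modulus at most $\rho_w$. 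Combined with reciprocity, and with the geometric fact that $\rho_w>1$ produces an expanding eigenvector spanning, together with its contracting partner, a plane of signature $(1,1)$ on which $\phi(w)$ acts hyperbolically, this exhibits $\rho_w$ as a Salem number in the Lorentzian case and, in higher indefinite signature, as a bi-Perron algebraic integer with reciprocal spectrum.

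The geometric reformulation I would use is that $\log\rho_w$ equals the translation length of $\phi(w)$ acting on the hyperbolic (or, in general, Hilbert) metric carried by the positive cone of $\mathcal{B}$; the spectral radius thus measures a minimal displacement, and the lower bound $\rho_w\geq\lambda_{\text{Lehmer}}$ becomes a uniform lower bound on displacement lengths across all Coxeter systems. This is where the extremal example enters: Lehmer's number is realized as the spectral radius of the Coxeter element of the rank-$10$ hyperbolic system $E_{10}$ (the $(2,3,7)$ tree diagram), whose characteristic polynomial is exactly Lehmer's polynomial, and the claim is that this configuration minimizes the displacement.

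The hard part is establishing that no Coxeter element yields a Salem or bi-Perron number strictly between $1$ and $\lambda_{\text{Lehmer}}$. This cannot be deduced from the (still open) Lehmer conjecture and genuinely requires a structural argument tied to the Coxeter setting. Following McMullen, I would bound the translation length from below by a spectral gap governed by the inner products $-\cos(\pi/m_{i,j})$ between roots in the Lorentzian form: the displacement of a product of reflections along roots whose mutual angles are constrained by the $m_{i,j}$ admits a uniform positive minimum, and one must verify that this minimum is attained precisely at the $E_{10}$ configuration. Assembling the reduction to the indefinite irreducible case, the reciprocity and integrality that make $\rho_w$ a Salem number, and this geometric lower bound yields the stated dichotomy $\rho_w=1$ or $\rho_w\geq\lambda_{\text{Lehmer}}$.
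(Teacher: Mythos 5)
This theorem is not proved in the paper at all: it is quoted verbatim from McMullen \cite[Theorem~1.1]{mcmullen_coxeter_2002}, so the only meaningful comparison is against McMullen's actual argument. Measured against that, your write-up is a correct description of the known scaffolding (reduction to irreducible factors, $\mathcal{B}$-reciprocity of the spectrum, integrality, the Hilbert metric, $E_{10}$ as the extremal configuration), but it is not a proof: the two decisive steps are asserted rather than argued.

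First, you never actually bound $\rho_w$ for an \emph{arbitrary} element with $\rho_w>1$. The dichotomy is per element, not per group: an irreducible infinite nonaffine $W$ contains many elements of spectral radius $1$ (reflections, all finite-order elements), and also ``peripherical'' elements whose relevant dynamics live in a proper parabolic. McMullen's central mechanism is to pass to the parabolic closure, compare the displacement of $w$ in the Hilbert metric of the Tits cone with that of a \emph{bicolored Coxeter element}, and conclude $\rho(w)\geq\alpha(W)$, where $\alpha(W)$ is the leading eigenvalue of such a Coxeter element. Your sketch invokes translation length but supplies no inequality of this kind; moreover your claimed identity ``$\log\rho_w$ equals the translation length'' is itself unproved and is stronger than what is needed or known in general. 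Second, the extremal statement --- that over all irreducible infinite nonaffine diagrams the minimum of $\alpha(W)$ is Lehmer's number, attained at the $(2,3,7)$ diagram $E_{10}$ --- is precisely the hard content of the theorem, and your text reduces it to ``one must verify that this minimum is attained at the $E_{10}$ configuration.'' McMullen's proof of this step uses monotonicity of leading eigenvalues of Coxeter elements under adding vertices and increasing edge labels, the classification of the minimal hyperbolic diagrams, and the Salem-polynomial structure coming from A'Campo's and Howlett's results; none of this appears in your proposal. As written, the argument would not exclude an element whose spectral radius lies strictly between $1$ and $\lambda_{\text{Lehmer}}$, which is exactly what the theorem forbids.
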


In \cite{mcmullen_coxeter_2002}, McMullen also gives a proof that the elements 
of irreducible, infinite, and nonaffine Coxeter systems have the Eigenvalue
property, which derives from results of Vinberg \cite{vinberg_discrete_1971},
see also \cite[Lemma~7.3]{dyer_imaginary_2012}.

In trying to generalize Perron's theorem to more general matrices containing negative entries,
there is a framework of cone-preserving maps which generalize the notion of
$K$-primitivity with respect to a pointed convex closed full-dimensional cone
$K$, see for instance the survey article \cite{tam_perron_2004}.
The Tits cone and imaginary cone which are left invariant by the action of the group
seem like good candidates, but it turns out that the eigenvectors considered
are on the boundary of the cones which cannot be obtained from
$K$-primitive matrices as they would have to be in the interior of the cone.
Nevertheless, it is possible to say more about the spectrum as the following theorem shows.

\begin{theorem}[{Krammer \cite[Section~6.5]{krammer_conjugacy_2009}}]\label{thm:krammer}
Let $(W,S)$ be infinite, nonaffine, irreducible and $w\in W$ whose parabolic
closure is $W$.
The spectral radius of $\phi(w)$ is an eigenvalue, which is simple and strictly greater than 1.
\end{theorem}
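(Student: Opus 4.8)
The plan is to treat $M=\phi(w)\in O_{\mathcal{B}}(V)$ as an isometry of the form $\mathcal{B}$ and to extract the spectral radius $\rho_w$ from the interplay between this isometry and the imaginary cone. First I would record the structural facts. Since $(W,S)$ is irreducible, infinite, and nonaffine, the standard trichotomy for the classical form (Vinberg \cite{vinberg_discrete_1971}) puts us in the case where $\mathcal{B}$ is nondegenerate and indefinite; fix its signature $(p,q,0)$ with $p,q\geq 1$. Writing $B$ for the Gram matrix of $\mathcal{B}$, the relation $M^\top B M=B$ gives $M^\top=BM^{-1}B^{-1}$, so $M^\top$ is similar to $M^{-1}$ and the spectrum of $M$ is invariant under $\lambda\mapsto\lambda^{-1}$ with multiplicities preserved. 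Moreover, if $Mx=\lambda x$ then
\[
\lambda^2\,\mathcal{B}(x,x)=\mathcal{B}(Mx,Mx)=\mathcal{B}(x,x),
\]
so every eigenvector whose eigenvalue satisfies $\lambda^2\neq 1$ is $\mathcal{B}$-isotropic, and more generally generalised eigenspaces $V_\lambda,V_\mu$ with $\lambda\mu\neq 1$ are $\mathcal{B}$-orthogonal. These elementary identities are the backbone of the argument.

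For the \emph{Eigenvalue} property I would run a Perron--Frobenius argument on the imaginary cone $\mathcal{Z}$, the closed convex $W$-invariant cone studied via limit roots in \cite{dyer_imaginary_2012,hohlweg_asymptotical_2014}. In the irreducible infinite nonaffine case $\mathcal{Z}$ is salient and full-dimensional, and the hypothesis that the parabolic closure of $w$ equals $W$ guarantees that $w$ is \emph{essential}, so $M$ does not stabilise any proper face of $\mathcal{Z}$. Applying the cone-theoretic Perron--Frobenius theorem for cone-preserving maps (surveyed in \cite{tam_perron_2004}) to $M$ acting on $\mathcal{Z}$ should yield that $\rho_w$ is an eigenvalue of $M$ with an eigenvector $x_+\in\mathcal{Z}$; by the isotropy computation above $x_+$ lies on the boundary $\partial\mathcal{Z}$ rather than in its interior, which is exactly the feature, noted earlier, that blocks a direct appeal to $K$-primitivity. (Alternatively this property may be imported from McMullen's argument after Vinberg, as mentioned in the text.)

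For the inequality $\rho_w>1$ I would invoke McMullen's dichotomy $\rho_w\in\{1\}\cup[\lambda_{\text{Lehmer}},\infty)$ and rule out $\rho_w=1$. If $\rho_w=1$ then all eigenvalues lie on the unit circle, and combining the reciprocal symmetry of the spectrum with a semisimplicity analysis in the spirit of Lemma~\ref{lem:semisimple} forces $\langle w\rangle$ to have bounded orbits, so the parabolic closure of $w$ would be of finite or affine type. This contradicts the hypothesis that it is the infinite nonaffine group $W$, and hence $\rho_w\geq\lambda_{\text{Lehmer}}>1$.

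The decisive and hardest point is \emph{simplicity}. Let $V_\rho$ and $V_{1/\rho}$ be the generalised eigenspaces for $\rho_w$ and $\rho_w^{-1}$; by reciprocal symmetry they share a dimension $d$, are both totally $\mathcal{B}$-isotropic (since $\rho_w^2\neq 1$), and $V_\rho\oplus V_{1/\rho}$ carries a nondegenerate hyperbolic restriction of $\mathcal{B}$ of signature $(d,d)$, so that $d\leq\min(p,q)$. This easy bound does not give $d=1$. The main obstacle is twofold: one must exclude a nontrivial Jordan block at $\rho_w$, and one must exclude a genuinely $2$-dimensional semisimple isotropic eigenspace. The isometry identities alone do not suffice here, because a Jordan block at $\rho_w$ can be paired with one at $\rho_w^{-1}$ consistently with $M^\top BM=B$; what breaks the tie is the cone. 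A $d\geq 2$ attracting eigenspace, or a nontrivial nilpotent part, would force $\mathbb{P}(\mathcal{Z})$ to contain a positive-dimensional flat inside $\partial\mathcal{Z}$, which the essentiality of $w$ together with the irreducibility of $W$ should forbid, since such a flat would descend to a proper parabolic or to a reducibility of $\mathcal{B}$. Making this last step rigorous is precisely where Krammer's finer analysis of the imaginary cone and of essential elements \cite{krammer_conjugacy_2009} is indispensable, and it is the part I expect to be the true difficulty.
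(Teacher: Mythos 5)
The first thing to note is that the paper contains no proof of this statement: it is quoted from Krammer \cite[Section~6.5]{krammer_conjugacy_2009}, and the paper's own Remark~\ref{rem:krammer} records that Krammer's proof ``relies on the structure of root systems'' --- indeed, that Krammer himself tried the stable-cone/Perron--Frobenius route you propose and abandoned it, leaving the dominance question as a conjecture. Your sketch is exactly that cone-theoretic route, and it holds up only on the easiest point: a linear map preserving a proper closed convex cone has its spectral radius as an eigenvalue with eigenvector in the cone (weak Perron--Frobenius for cone-preserving maps), which is the Eigenvalue property the paper attributes to Vinberg and McMullen. Even there you assert without proof that the imaginary cone is salient and spans $V$, and your structural setup contains an inaccuracy: irreducible, infinite, nonaffine only means $\mathcal{B}$ is \emph{not positive semidefinite}; it can perfectly well be degenerate, so the signature need not be $(p,q,0)$ and the clean hyperbolic pairing of $V_\rho$ with $V_{1/\rho}$ already needs repair.

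The two genuine gaps are precisely the two nontrivial conclusions. For \emph{simplicity}, you concede that excluding a Jordan block at $\rho_w$ or a higher-dimensional isotropic eigenspace ``is precisely where Krammer's finer analysis \dots is indispensable'' --- that is, you defer the decisive step to the very result being proved; the identities coming from $M^\top BM=B$ together with cone invariance demonstrably do not suffice (a Jordan block at $\rho_w$ paired with one at $\rho_w^{-1}$ is consistent with both), and no argument is actually given that a flat in $\partial\mathcal{Z}$ contradicts essentiality. For $\rho_w>1$, your reduction is broken: a spectrum contained in the unit circle does not force bounded orbits unless semisimplicity is established first --- the paper's own affine example $\phi(s_1s_2)=\left(\begin{smallmatrix}3 & -2\\ 2 & -1\end{smallmatrix}\right)$ has spectrum $\{1\}$ and unbounded powers --- and nothing in your sketch supplies semisimplicity. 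Ruling out $\rho_w=1$ for essential elements of irreducible infinite nonaffine groups is itself part of the content of Krammer's theorem; note that the paper's Proposition~\ref{prop:spectralradiusonecase} deduces that direction \emph{from} Theorem~\ref{thm:krammer}, not the other way around, so an argument in this style is circular. In sum, the proposal establishes the Eigenvalue property and defers or errs on both ``simple'' and ``strictly greater than $1$''.
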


\begin{remark}\label{rem:krammer}
	This theorem does not show the dominance of the spectral radius, that
	is to say, there are no other eigenvalues with the same modulus.
	Krammer mentionned in his 1992 thesis (republished in 2009) that he
	tried to find a stable cone to apply Perron--Frobenius techniques
	without success and left it as a conjecture, \cite[Section~6.5]{krammer_conjugacy_2009}.
	The proof of this theorem rather relies on the structure of root systems.
	An original motivation of the current work was to show the dominance of the spectral radius, based on 
	Perron--Frobenius theory.
\end{remark}

\subsection{A conjecture on elements with Perron's properties}

In this section, we describe and motivate a closer study of the spectral radius of 
matrices in the geometric representation of Coxeter groups.

The conjecture below  is motivated by the study of \emph{infinite
reduced words} and their associated \emph{inversion set},
see~\cite{hohlweg_inversion_2015}. An infinite reduced word is an infinite sequence of
generators in $S$ where every prefix is reduced. Its inversion set is a special
set of vectors called \emph{roots} which characterizes geometrically the
infinite reduced word. It is conjectured that the inversion set of an infinite
reduced word, seen in projective space, has a unique accumulation point. This
conjecture is known to hold when the Coxeter system is \emph{Lorentzian}
\cite{chen_limit_2014}. This conjecture calls for a better knowledge of the 
spectral properties of elements in geometric representations of Coxeter system.

The following conjecture characterizes those elements in a geometric
representation of a Coxeter system which possess the first three 
conclusions of Perron's theorem along with a positive right eigenvector.
It is closely related to Krammer's conjecture
\cite[Conjecture~6.5.16]{krammer_conjugacy_2009}, however it also includes 
elements whose parabolic closure is not necessarily the whole group.
In~\cite{mcmullen_coxeter_2002}, elements whose parabolic closure is the whole
group are called \emph{essential} and otherwise are called \emph{peripherical}.

\begin{conjecture}\label{conj:charac_perron}
Let $(W,S)$ be Coxeter system with $|S|>2$, $\phi$ be a geometric representation and $w\in W$.
If the parabolic closure of $w$ is irreducible, infinite and nonaffine, then
the spectral radius of $\phi(w)$ is a simple and dominant eigenvalue of
$\phi(w)$.
Moreover, it is strictly greater than $1$ with positive right eigenvectors.
\end{conjecture}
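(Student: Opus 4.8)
The plan is to reduce Conjecture~\ref{conj:charac_perron} to a single eventual-positivity statement about the explicit conjugate matrix produced by Theorem~\ref{thm:simpledominantsto}, and then to attack that statement using the column-sign structure of the geometric representation. First I would dispose of the peripherical case. If the parabolic closure of $w$ is a parabolic subgroup of irreducible infinite nonaffine type, then after conjugating in $GL(V)$ we may assume $w$ lies in a standard parabolic $W_I$; on the invariant subspace $V_I$ spanned by the simple roots indexed by $I$ the element $w$ acts as an \emph{essential} element of $W_I$, while the complementary directions contribute only eigenvalues of modulus at most $1$. Since Krammer's Theorem~\ref{thm:krammer} gives $\rho_w>1$, both the Simplicity and the Dominance of $\rho_w$ for $\phi(w)$ are governed entirely by this restriction. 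Thus it suffices to prove the conjecture for essential elements, i.e.\ when the parabolic closure is the whole group.

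In the essential case, Theorem~\ref{thm:krammer} immediately delivers the Eigenvalue and Simplicity conclusions together with $\rho_w>1$. For the positive right eigenvector $v$ associated to $\rho_w$, I would identify the dominant eigendirection with a \emph{limit root}: the accumulation point in projective space of the inversion set of an infinite reduced word built from $w$. Limit roots are limits of positive roots, hence lie in the closure of the cone spanned by the simple roots and can be chosen with nonnegative coordinates, and essentiality (full support) should upgrade this to strict positivity. This is consistent with the column-sign property recalled in Section~\ref{ssec:cox_definition}: if $\rho_w$ were already known to be simple and dominant, then $(\rho_w^{-1}\phi(w))^k$ tends to the rank-one matrix $vu$, so each column of $\phi(w)^k$ is asymptotically a scalar multiple of $v$, and its being sign-definite \emph{forces} $v$ to be sign-definite. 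Establishing sign-definiteness of $v$ unconditionally is where the imaginary-cone description of limit roots must be invoked.

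With Eigenvalue, Simplicity, and a sign-definite right eigenvector $v$ in hand, the only remaining content is Dominance, and here I would invoke Corollary~\ref{cor:flip-primitive}. Choosing the signature matrix $S$ so that $Sv$ is positive and normalizing $\UN^\top Sv=(1)$ and $uv=(1)$, Dominance of $\rho_w$ becomes \emph{equivalent} to the eventual positivity of the explicit conjugate matrix $Z=\rho_w\,Sv\,\UN^\top + SAS - Sv\,\UN^\top SAS$, where $A=\phi(w)$; equivalently, by criterion~(vi) of Theorem~\ref{thm:simpledominantsto} (read off $SAS$), one must exhibit an integer $k$ with $a_{ij}^{(k)} > v_i\bigl(\sum_{\ell} a_{\ell j}^{(k)} - \rho_w^{\,k}\bigr)$ for all $i,j$. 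The entire conjecture therefore collapses to proving that some power of the single explicit matrix $Z$ is strictly positive.

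The hard part is exactly this eventual positivity, and I do not expect it to yield to a soft argument. As Krammer observed and as Remark~\ref{rem:krammer} recalls, the natural invariant cones — the Tits cone and the imaginary cone — are unhelpful here because the dominant eigendirection is a limit root lying \emph{on the boundary} of these cones, so no $K$-primitive, cone-preserving Perron--Frobenius argument applies verbatim. The route I would pursue is quantitative: run the estimate behind Lemma~\ref{lem:vuexpfast} in the reverse direction, showing that the normalized iterates $(\rho_w^{-1}A)^k$ contract every root toward the one-dimensional limit-root line at a geometric rate, and argue that any hypothetical second eigenvalue of modulus $\rho_w$ would produce a second accumulation direction of the inversion set, contradicting the expected uniqueness of the accumulation point of limit roots (established in the Lorentzian case in \cite{chen_limit_2014}). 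Turning this contraction into a \emph{strict} separation of moduli — equivalently, ruling out a subdominant eigenvalue on the spectral circle — is precisely the open obstacle left by Theorem~\ref{thm:krammer}; the contribution of Theorem~\ref{thm:simpledominantsto} is to reduce it to the concrete, machine-verifiable condition that a power of $Z$ is positive, which is the form in which the examples of Section~\ref{sec:coxeter} provide supporting evidence.
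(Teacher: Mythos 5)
The statement you set out to prove is a \emph{conjecture}: the paper does not prove it and explicitly leaves it open (``More research has to be done to see whether Theorem~\ref{thm:simpledominantsto} and this approach can lead to a proof of Conjecture~\ref{conj:charac_perron}''), proving only the spectral-radius-one case of the companion Conjecture~\ref{conj:equivalence} in Proposition~\ref{prop:spectralradiusonecase}. Your proposal is likewise not a proof, and you say so yourself: everything is funneled, via Corollary~\ref{cor:flip-primitive} and criterion (vi) of Theorem~\ref{thm:simpledominantsto}, into the eventual positivity of the explicit conjugate matrix $Z$, and that step --- equivalently the Dominance property, which is precisely what Krammer's Theorem~\ref{thm:krammer} does not give (Remark~\ref{rem:krammer}) --- is left unproven. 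So the proposal has a genuine gap exactly at its core; what you have written is the same reduction-plus-numerical-evidence strategy that the paper itself advocates, not an argument that closes it.

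Beyond that admitted gap, several intermediate steps would not survive scrutiny. The route you sketch for Dominance --- a second eigenvalue of modulus $\rho_w$ would force a second accumulation direction of the inversion set, contradicting uniqueness of the limit point --- rests on a statement the paper records as open outside the Lorentzian case \cite{chen_limit_2014}; you would be proving one conjecture by assuming another, and the paper presents these conjectures as mutual motivation, not as a deduction. Your claim that essentiality ``should upgrade'' nonnegativity of the limit-root direction to strict positivity is unsubstantiated, and the supporting column-sign argument assumes simplicity and dominance --- the conclusions in question --- so it cannot be used unconditionally. Finally, the peripherical-to-essential reduction does not deliver the last clause of the conjecture: if $w=gw'g^{-1}$ with $w'$ essential in a proper standard parabolic $W_I$, the dominant eigenvector of $\phi(w')$ is supported on $V_I$, hence has zero coordinates outside $I$, and the eigenvector of $\phi(w)$ is its image under $\phi(g)$; nothing in your sketch shows this image is positive, so positivity of the right eigenvector for peripherical elements (which the paper emphasizes are deliberately included in Conjecture~\ref{conj:charac_perron}) does not follow from the essential case as you assert.
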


To support this conjecture, we present below three examples
respectively of rank $3$, $4$ and $5$.  We consider elements $w$ such that the
parabolic closure is infinite, irreducible and nonaffine.  We apply the
computational criterion of Theorem~\ref{thm:simpledominantsto} to conclude
that $\phi(w)$ has an eventually positive conjugate matrix and its spectral
radius is a simple and dominant eigenvalue.

\begin{example}
	Let $S=\{s_1,s_2,s_3\}$ and $W$ be the free Coxeter group on $S$, i.e. the product of any two generators has infinite order.
	Consider the bilinear form where the values of $c_{i,j}$ are all equal to $2$.
	The bilinear form then has signature $(2,1,0)$.
	In this case, it is known that elements of the group are elliptic, parabolic or hyperbolic, see \cite{chen_limit_2014} for a detailed description.
	Hyperbolic elements have a unique real simple and dominant eigenvalue
	greater than 1 with a corresponding positive right eigenvector.
	The generating set $S$ is 
	\[
		\left\{\left(\begin{array}{rrr}
					-1 & 4 & 4 \\
					0 & 1 & 0 \\
					0 & 0 & 1
			\end{array}\right), \left(\begin{array}{rrr}
					1 & 0 & 0 \\
					4 & -1 & 4 \\
					0 & 0 & 1
			\end{array}\right), \left(\begin{array}{rrr}
					1 & 0 & 0 \\
					0 & 1 & 0 \\
					4 & 4 & -1
		\end{array}\right)\right\}.
	\]
	The matrix corresponding to $s_1s_2s_3s_2$ is
	\[
		H = \phi(s_1s_2s_3s_2)=\left(\begin{array}{rrr}
				399 & -76 & 284 \\
				80 & -15 & 56 \\
				20 & -4 & 15
		\end{array}\right),
	\]
for which
$\lambda\approx 397.9974$ is an eigenvalue
with right eigenvector
$v\approx (
0.7995,%981893280238?,
0.1603,%214485375810?,
0.04008%036213439524?
)^\top$.
Using Theorem~\ref{thm:simpledominantsto}, we compute that
\[
    \lambda v\UN^\top + H - v\UN^\top H \approx  
    \left(\begin{array}{rrr}
		    318.23857 & 318.19990 & 318.38071 \\
		    63.807131 & 64.038071 & 62.893420 \\
		    15.951783 & 15.759518 & 16.723355
    \end{array}\right) > 0.
\]
As expected, $\lambda$ is a simple and dominant eigenvalue of $H$.
\end{example}

The previous example is a \emph{Lorentzian} Coxeter system covered by the work
in \cite{chen_limit_2014} while the conjecture on the uniqueness of the
accumulation point for infinite reduced words is still open for the
next two examples of rank $4$ and $5$.

\begin{example}
	Let $S=\{s_1,s_2,s_3,s_4\}$ and $W$ be the free Coxeter group on $S$, i.e. the product of any two generators has infinite order.
	Consider the bilinear form where the value of $c_{i,j}$ is 1, except for $c_{1,2}=2$ and $c_{3,4}=6$.
	The bilinear form then has signature $(2,2,0)$.
	The generating set $S$ is
	\[
	\left\{\left(\begin{array}{rrrr}
				-1 & 4 & 2 & 2 \\
				0 & 1 & 0 & 0 \\
				0 & 0 & 1 & 0 \\
				0 & 0 & 0 & 1
		\end{array}\right), \left(\begin{array}{rrrr}
				1 & 0 & 0 & 0 \\
				4 & -1 & 2 & 2 \\
				0 & 0 & 1 & 0 \\
				0 & 0 & 0 & 1
		\end{array}\right), \left(\begin{array}{rrrr}
				1 & 0 & 0 & 0 \\
				0 & 1 & 0 & 0 \\
				2 & 2 & -1 & 12 \\
				0 & 0 & 0 & 1
		\end{array}\right), \left(\begin{array}{rrrr}
				1 & 0 & 0 & 0 \\
				0 & 1 & 0 & 0 \\
				0 & 0 & 1 & 0 \\
				2 & 2 & 12 & -1
	\end{array}\right)\right\}.
	\]
	The matrix corresponding to $s_1s_3s_2s_4s_2s_3$ is
	\[
		H = \phi(s_1s_3s_2s_4s_2s_3)=\left(\begin{array}{rrrr}
				1763 & 1264 & -670 & 8150 \\
				84 & 61 & -32 & 388 \\
				672 & 480 & -255 & 3104 \\
				42 & 30 & -16 & 195
		\end{array}\right),
	\]
for which
$\lambda\approx 1761.9994$ is an eigenvalue
with right eigenvector
\[
    v\approx (
0.6884,%02898538498?,
0.03279,%969489068443?,
0.2623,%975591254755?,
0.01639%984744534222?)
)^\top.
\]
Using Theorem~\ref{thm:simpledominantsto}, we compute that
\[
    \lambda v\UN^\top + H - v\UN^\top H \approx  
    \left(\begin{array}{rrrr}
		    1212.9657 & 1213.7462 & 1212.7815 & 1214.3404 \\
		    57.793025 & 58.605604 & 57.707147 & 57.543055 \\
		    462.34420 & 460.84483 & 462.65718 & 460.34444 \\
		    28.896513 & 28.802802 & 28.853573 & 29.771528
    \end{array}\right) > 0.
\]
Therefore, $\lambda$ is a simple and dominant eigenvalue of $H$.
\end{example}

\begin{example}
Let $S=\{s_1,s_2,s_3,s_4,s_5\}$ and $W$ be the Coxeter group given by the relations $(s_1s_2)^{\infty}=(s_2s_3)^{\infty}=(s_3s_4)^{\infty}=(s_4s_5)^{\infty}=(s_1s_5)^{\infty}=e$ and all other pairs of generators commute.
Further, choose the parameters $c_{ij}$ in the bilinear form for the 5 infinite labels to be equal to $2$.
Then, the signature of the bilinear form is $(2,3,0)$.
The generating set $S$ is
\[
\resizebox{\hsize}{!}
{$
\left\{\left(\begin{array}{rrrrr}
		 -1 & 4 & 0 & 0 & 4 \\
		 0 & 1 & 0 & 0 & 0 \\
		 0 & 0 & 1 & 0 & 0 \\
		 0 & 0 & 0 & 1 & 0 \\
		 0 & 0 & 0 & 0 & 1
 \end{array}\right), \left(\begin{array}{rrrrr}
		 1 & 0 & 0 & 0 & 0 \\
		 4 & -1 & 4 & 0 & 0 \\
		 0 & 0 & 1 & 0 & 0 \\
		 0 & 0 & 0 & 1 & 0 \\
		 0 & 0 & 0 & 0 & 1
 \end{array}\right), \left(\begin{array}{rrrrr}
		 1 & 0 & 0 & 0 & 0 \\
		 0 & 1 & 0 & 0 & 0 \\
		 0 & 4 & -1 & 4 & 0 \\
		 0 & 0 & 0 & 1 & 0 \\
		 0 & 0 & 0 & 0 & 1
 \end{array}\right), \left(\begin{array}{rrrrr}
		 1 & 0 & 0 & 0 & 0 \\
		 0 & 1 & 0 & 0 & 0 \\
		 0 & 0 & 1 & 0 & 0 \\
		 0 & 0 & 4 & -1 & 4 \\
		 0 & 0 & 0 & 0 & 1
 \end{array}\right), \left(\begin{array}{rrrrr}
		 1 & 0 & 0 & 0 & 0 \\
		 0 & 1 & 0 & 0 & 0 \\
		 0 & 0 & 1 & 0 & 0 \\
		 0 & 0 & 0 & 1 & 0 \\
		 4 & 0 & 0 & 4 & -1
\end{array}\right)\right\}.
$}
\]
The matrix corresponding to $s_1s_2s_3s_4s_5s_1s_2$ is
\[
	H = \phi(s_1s_2s_3s_4s_5s_1s_2) =
    \left(\begin{array}{rrrrr}
		    16065 & -4280 & 17360 & 976 & 3960 \\
		    3960 & -1055 & 4280 & 240 & 976 \\
		    976 & -260 & 1055 & 60 & 240 \\
		    240 & -64 & 260 & 15 & 60 \\
		    60 & -16 & 64 & 4 & 15
    \end{array}\right)
\]
for which
$\lambda\approx 16094.04766330161$ is an eigenvalue
with right eigenvector
\[
    v\approx (
0.7541,%877613529869?,
0.1859,%081902107751?,
0.04582,%090158759044?,
0.01126,%865510091163?,
0.002814%491747736003?)
)^\top.
\]
Using Theorem~\ref{thm:simpledominantsto}, we compute that
\[
    \lambda v\UN^\top + H - v\UN^\top H \approx  
    \left(\begin{array}{rrrrr}
		    12137.980 & 12137.949 & 12137.286 & 12137.261 & 12137.694 \\
		    2991.9849 & 2992.0443 & 2992.5946 & 2991.2642 & 2991.8114 \\
		    737.41275 & 737.47739 & 737.69244 & 738.10571 & 736.83822 \\
		    181.32465 & 181.30789 & 181.96510 & 181.76536 & 182.18656 \\
		    45.345076 & 45.268805 & 44.509779 & 45.651798 & 45.517668
    \end{array}\right)>0.
\]
Therefore, $\lambda$ is a simple and dominant eigenvalue of $H$.
\end{example}

As the examples show, it is possible to obtain information about the simplicity
and the dominance of a real eigenvalue of a matrix representing an element of
an infinite Coxeter group using Perron-Frobenius theory. This is an achievement
that was left open in the work of Krammer (see Remark~\ref{rem:krammer}).
Further, this criterion is a characterization: if the obtained matrix is not
eventually positive, then either the eigenvalue is not simple or there is
another eigenvalue with the same modulus.
More research has to be done to see whether
Theorem~\ref{thm:simpledominantsto} and this approach can lead to
a proof of Conjecture~\ref{conj:charac_perron}.

Using the software Sage \cite{sage}, we tested this criterion on several different irreducible infinite nonaffine 
Coxeter groups (of rank $\leq 8$), representations, and several elements (in particular
elements different from Coxeter elements). Interestingly, it seems that the
matrix obtained using Theorem~4.4 is already positive when the parabolic closure is
irreducible, infinite and nonaffine. Thus, the eventual positivity of the
conjugate matrix follows immediately.

\subsection{Towards an equivalence}

In this section, we consider the reverse of the conjecture. We remark in the
next example that the reverse is false: the parabolic closure of an element
whose spectral radius is a simple and dominant eigenvalue may be reducible.
Then, we propose an adaptation of the conjecture that is stated as an
equivalence for the case where the parabolic closure is irreducible. We support
our conjecture with some examples and a proof in the case when the spectral
radius is $1$ (Proposition~\ref{prop:spectralradiusonecase}).

\begin{example}
	Let $S=\{s_1,s_2,s_3,s_4,s_5\}$ and $W$ be the Coxeter group on $S$
	with the relations $c_{1,2}=3$, $(s_2s_3)^3=(s_3s_4)^3=e$, $c_{4,5}=2$,
	and the other pairs commute.
	The parabolic closure of the element $w=s_1s_2s_4s_5$ is infinite,
	reducible, and nonaffine.
	Computing the eigenvalues of $\phi(w)$, one gets
	\[
	\{\approx 0.029437, \approx 0.071796, 1, \approx 13.928203, \approx 33.970562\}.
	\]
	Thus, the spectral radius is a simple and dominant eigenvalue.
\end{example}

Therefore, if the parabolic closure of $w$ is reducible, the spectral radius of
$\phi(w)$ may be a dominant and simple eigenvalue.
We thus propose the equivalence in the next conjecture which is restricted to
the case where the parabolic closure of $w$ is irreducible.

\begin{conjecture}\label{conj:equivalence}
Let $(W,S)$ be Coxeter system with $|S|>2$, $\phi$ be a geometric representation and $w\in W$.
Assume that the parabolic closure of $w$ is irreducible.
The following statements are equivalent.
\begin{enumerate}[\rm (i)]
\item The parabolic closure of $w$ is infinite and nonaffine.
\item The spectral radius of $\phi(w)$ is a simple and dominant eigenvalue.
\end{enumerate}
Moreover, if one of the above condition holds, then the spectral radius of
$\phi(w)$ is strictly greater than 1 with positive right eigenvectors.
\end{conjecture}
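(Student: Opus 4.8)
The plan is to treat the two implications separately, since they are of very different difficulty. Although the statement looks symmetric, the implication (ii)$\implies$(i) is accessible through standard structural facts about Coxeter groups, whereas (i)$\implies$(ii) contains the genuinely open dominance question and is essentially Krammer's unresolved conjecture (Remark~\ref{rem:krammer}). A useful preliminary reduction for both directions is the following. Up to conjugacy we may assume the parabolic closure of $w$ is the \emph{standard} parabolic subgroup $W_I$ for some $I\subseteq S$, since conjugate elements have identical spectra. Writing $V_I=\mathrm{span}\{\alpha_s : s\in I\}$, each generator with $s\in I$ satisfies $\sigma_{\alpha_s}(x)\equiv x \pmod{V_I}$, so $\phi(w)$ acts trivially on the quotient $V/V_I$. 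Hence the spectrum of $\phi(w)$ on $V$ is the spectrum of its restriction to the ``essential'' subspace $V_I$ together with the eigenvalue $1$ occurring with multiplicity $n-|I|$.

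For (ii)$\implies$(i) I would argue by contraposition: assume $W_I$ is irreducible but \emph{not} infinite-nonaffine, so it is either finite or irreducible affine. In the finite case $w$ has finite order, $\phi(w)$ restricted to $V_I$ is orthogonal for the positive-definite form $\mathcal{B}|_{V_I}$ so its $|I|$ eigenvalues have modulus $1$, and together with the eigenvalue $1$ of multiplicity $n-|I|$ coming from $V/V_I$ all $n\geq 3$ eigenvalues have modulus $1$; thus the spectral radius $1$ is not dominant and (ii) fails. In the affine case $w$ has infinite order, the translation component of $w$ inside the affine Weyl group $W_I$ forces a nontrivial Jordan block at the eigenvalue $1$, while positive-semidefiniteness of $\mathcal{B}|_{V_I}$ keeps every eigenvalue on the unit circle (consistently with McMullen's dichotomy), so the spectral radius is $1$ but not semisimple, hence not simple, and again (ii) fails. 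The only step needing care is ``the translation forces a Jordan block'', which one settles with the standard presentation of affine Weyl groups as a finite reflection group extended by a translation lattice acting unipotently on $V_I$; this is exactly the mechanism already used in the $\rho=1$ analysis of Proposition~\ref{prop:spectralradiusonecase}.

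For (i)$\implies$(ii), \emph{simplicity} and the ``moreover'' clause are within reach, but \emph{dominance} is the main obstacle. Assuming $W_I$ is irreducible, infinite and nonaffine, Krammer's theorem (Theorem~\ref{thm:krammer}) gives that the spectral radius $\rho$ of the restriction to $V_I$ is a simple eigenvalue with $\rho>1$; since $V/V_I$ only contributes the eigenvalue $1<\rho$, the eigenvalue $\rho$ stays simple on all of $V$, settling simplicity. Positivity of a right eigenvector for $\rho$ follows from the sign-coherence of the columns of $\phi(w)$ recorded in Section~\ref{ssec:cox_definition}: after conjugating by a suitable signature matrix as in Corollary~\ref{cor:flip-primitive}, the Perron direction attached to $\rho$ can be taken with all entries of one sign. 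What remains is to rule out any other eigenvalue of modulus $\rho$, and the route advocated here is to form the explicit conjugate $\lambda v\UN^\top + A - v\UN^\top A$ of Theorem~\ref{thm:simpledominantsto} with $A=\phi(w)$ and prove it eventually positive, which by that theorem is \emph{equivalent} to simplicity together with dominance.

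The hard part is therefore to establish eventual positivity of this conjugate matrix \emph{structurally} and uniformly over all such $w$, rather than case by case as in the examples. No invariant cone is known that would permit a direct Perron--Frobenius argument, since the relevant eigenvectors lie on the boundary of the Tits and imaginary cones (as noted before Theorem~\ref{thm:krammer}), so a $K$-primitivity approach appears unavailable. A plausible strategy is to combine Krammer's control of the real spectrum on $V_I$ with a quantitative comparison, via Lemma~\ref{lem:vuexpfast}, between the growth rate $\rho^k$ and the moduli of the remaining eigenvalues; proving that this gap is strict is precisely the missing dominance input. Absent such an input, only the reverse implication and the $\rho=1$ instance are provable, which is why the statement is recorded as a conjecture.
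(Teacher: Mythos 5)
The statement you were asked to prove is recorded in the paper as a \emph{conjecture}: the paper offers no complete proof, only supporting examples, Krammer's theorem (Theorem~\ref{thm:krammer}) for simplicity, and Proposition~\ref{prop:spectralradiusonecase}, which settles the equivalence vacuously for elements of spectral radius $1$ by showing that both (i) and (ii) then fail, via the observation that $\det\phi(w)=\pm1$ forces all eigenvalues onto the unit circle. Your proposal correctly recognizes this status and correctly isolates the open core: dominance in the direction (i)$\implies$(ii), which is exactly the question Krammer left unresolved (Remark~\ref{rem:krammer}). Where you genuinely differ from the paper is in the provable part. Instead of the paper's determinant argument under the hypothesis $\rho=1$, you prove the full reverse implication (ii)$\implies$(i) by reducing to a standard parabolic $W_I$, noting that $\phi(w)$ is block-triangular with trivial action on $V/V_I$, and then treating the finite case (orthogonal action, eigenvalues of modulus $1$) and the irreducible affine case (unipotent translation part forcing a Jordan block) separately. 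This is a sound and somewhat stronger decomposition: combined with the argument of Proposition~\ref{prop:spectralradiusonecase}, it makes explicit that everything in the conjecture except dominance (and eigenvector positivity) in (i)$\implies$(ii) is actually a theorem, a fact the paper leaves implicit.

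Two points need repair. First, in the finite case you conclude that ``the spectral radius $1$ is not dominant''; when $1$ is the \emph{only} eigenvalue, dominance holds vacuously and it is simplicity that fails. The paper is careful about exactly this (see the end of the proof of Proposition~\ref{prop:spectralradiusonecase}, and the example of $s_3s_1s_2s_3$, whose spectral radius is dominant but not simple), so your conclusion should read ``not simple or not dominant.'' Second, and more substantively, your claim that the ``moreover'' clause --- positivity of a right eigenvector --- follows from sign-coherence of the columns via Corollary~\ref{cor:flip-primitive} is not justified: sign-coherence of the columns of powers of $\phi(w)$ yields at best, after passing to the limit of Lemma~\ref{lem:vuexpfast} (which already presupposes the dominance you have not established), \emph{weak} sign-coherence of $v$, with zero entries not excluded; and conjugating by a signature matrix changes the matrix, it does not make an eigenvector of $\phi(w)$ itself positive. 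That positivity assertion is itself part of what Conjectures~\ref{conj:charac_perron} and~\ref{conj:equivalence} claim and is open, so it belongs with dominance on your list of missing inputs, not among the things ``within reach.''
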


The next example consists of an element whose parabolic closure is an infinite, 
irreducible, and affine Coxeter group on which we cannot apply 
Theorem~\ref{thm:simpledominantsto}, nevertheless,
Lemma~\ref{lem:multiplicity2} allows to conclude that it still satisfies
 Conjecture~\ref{conj:equivalence}.

\begin{example}
	Let $S=\{s_1,s_2\}$ and $W$ be the free Coxeter group on $S$, i.e. the product of any two generators has infinite order.
	Consider the bilinear form where the value of $c_{1,2}$ is 1.
	The bilinear form then has signature $(1,0,1)$ and the group with this
	represenation is of affine type.
	The generating set $S$ is
	\[
	\left\{\left(\begin{array}{rr}
				-1 & 2 \\
				0 & 1
		\end{array}\right), \left(\begin{array}{rr}
				1 & 0 \\
				2 & -1
	\end{array}\right)\right\}.
	\]
	The matrix corresponding to $s_1s_2$ is
	\[
		\phi(s_1s_2) =\left(\begin{array}{rr}
				3 & -2 \\
				2 & -1
		\end{array}\right),
	\]
for which $\lambda=1$ is an eigenvalue with right eigenvector
$ v= (1,1)^\top$ and left eigenvector $u= (1,-1)$. Since $uv=(0)$, by
Lemma~\ref{lem:multiplicity2} we conclude that $\lambda$ has algebraic multiplicity 2.
\end{example}

The next example illustrates Conjecture~\ref{conj:equivalence} and the
distinction between elements for which the infinite and irreducible parabolic
closure is affine or not.

\begin{example}
	Let $S=\{s_1,s_2,s_3\}$ and $W$ be the free Coxeter group on $S$, i.e. the product of any two generators has infinite order.
	Consider the bilinear form where the values of $c_{i,j}$ are all equal
	to~$1$.
	The element $s_3s_1s_2s_3$ is conjugated to an element in the standard parabolic subgroup generated by $\{s_1,s_2\}$, which is
	infinite, irreducible, and affine. 
	In this case, the spectral radius of $s_3s_1s_2s_3$ is dominant but
	not simple since $1$ is the only eigenvalue with algebraic
	multiplicity~3. Thus, it verifies Conjecture~\ref{conj:equivalence}.
	One can verify that the parabolic closure of the element $s_1s_2s_1s_3s_2s_3$ is
	the whole group $W$ which is infinite, irreducible and nonaffine.
	Moreover, the eigenvalues are 1, $\approx 0.005154,$ and $\approx
	193.994845$ so that its spectral radius is simple and dominant.
	Again, it verifies the Conjecture~\ref{conj:equivalence}.
\end{example}

We give here the proof of Conjecture~\ref{conj:equivalence} for elements $w$ for
which $\phi(w)$ has spectral radius 1. In particular, the conjecture holds for
elements of finite order.

\begin{proposition}\label{prop:spectralradiusonecase}
Let $(W,S)$ be Coxeter system with $|S|>2$, $\phi$ be a geometric representation and $w\in W$.
Assume that the spectral radius of $w$ is 1.
Both of the following statements are false.
\begin{enumerate}[\rm (i)]
\item The parabolic closure of $w$ is infinite, irreducible and nonaffine.
\item The spectral radius of $\phi(w)$ is a simple and dominant eigenvalue.
\end{enumerate}
\end{proposition}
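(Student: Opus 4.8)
The plan is to reduce everything to the structure of the \emph{parabolic closure} $P$ of $w$ and to prove that, under the hypothesis that the spectral radius $\rho$ of $\phi(w)$ equals $1$, \emph{every} eigenvalue of $\phi(w)$ has modulus $1$; both falsity statements will then follow at once. First I would conjugate so that $P=W_J$ is a standard parabolic subgroup with $w\in W_J$, and decompose $W_J=W_{J_1}\times\cdots\times W_{J_k}$ into its irreducible components with respect to the restricted form $\mathcal{B}$, writing $w=w_1\cdots w_k$ with $w_j\in W_{J_j}$. Minimality of the parabolic closure forces the parabolic closure of each $w_j$ inside $W_{J_j}$ to be all of $W_{J_j}$, so each $w_j$ is essential in its component. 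Setting $V_J=\mathrm{span}\{\alpha_s:s\in J\}$, the subspaces $V_{J_j}$ are $\mathcal{B}$-orthogonal and $\phi(W_J)$-invariant, so $\phi(w)$ restricts on $V_J$ to the block sum of the maps $\phi(w_j)\vert_{V_{J_j}}$, while it acts trivially on the quotient $V/V_J$. Hence the multiset of eigenvalues of $\phi(w)$ is the union of those of the blocks together with the eigenvalue $1$ repeated $n-|J|$ times.

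Next I would classify the components. An irreducible component whose restricted form is indefinite is infinite and nonaffine, so Theorem~\ref{thm:krammer} applies to $(W_{J_j},J_j)$ and gives that the spectral radius of $\phi(w_j)\vert_{V_{J_j}}$ is strictly larger than $1$; this would force $\rho>1$, contradicting the hypothesis. Therefore no component is indefinite, i.e.\ every restricted form is positive definite (finite type) or positive semidefinite (affine type). This already disposes of statement~(i): if $P$ is irreducible then $k=1$, and a nonaffine $P$ would be exactly such an excluded indefinite component.

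For the surviving components I would check that all eigenvalues of $\phi(w_j)\vert_{V_{J_j}}$ lie on the unit circle. Since $\phi(W_{J_j})$ preserves $\mathcal{B}$, the radical $R$ of the (semi)definite restricted form is fixed pointwise, so $\phi(w_j)$ acts as the identity on $R$ and isometrically on $V_{J_j}/R$, which carries an induced positive definite form; reading the eigenvalues off the filtration $0\subseteq R\subseteq V_{J_j}$ shows that every eigenvalue of the block has modulus $1$ (in the finite case $R=0$ and the whole block is $\mathcal{B}$-orthogonal). Combined with the previous step, every eigenvalue of $\phi(w)$ has modulus $1$. To finish statement~(ii), suppose the spectral radius $1$ were simple and dominant. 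Dominance would force every eigenvalue other than $1$ to have modulus strictly less than $1$; but each has modulus exactly $1$, so $1$ must be the only eigenvalue, of algebraic multiplicity $n$, contradicting simplicity since $n=|S|>2$. Hence~(ii) is false as well.

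The hard part will be the bookkeeping for a general geometric representation rather than the classical one. I must take the component decomposition of $P$ with respect to the chosen form $\mathcal{B}$, confirm that ``nonaffine'' for a component really means an indefinite restricted form so that Theorem~\ref{thm:krammer} is applicable to that component, and justify that essentiality of each $w_j$ legitimately identifies its parabolic closure in $W_{J_j}$ with $W_{J_j}$. A minor additional point to verify is that $\phi(w)$ acts trivially on $V/V_J$, since for $s\in J$ and $t\notin J$ one has $\sigma_{\alpha_s}(\alpha_t)\equiv\alpha_t \pmod{V_J}$; this is what legitimizes the eigenvalue bookkeeping on the quotient.
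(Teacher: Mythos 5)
Your argument is correct in substance, but it takes a genuinely different and noticeably heavier route than the paper, and the difference is concentrated in statement~(ii). For statement~(i) you and the paper do essentially the same thing: assume the parabolic closure is infinite, irreducible and nonaffine, and invoke Krammer's theorem (Theorem~\ref{thm:krammer}) to force a spectral radius strictly greater than $1$, contradicting the hypothesis (the paper additionally splits off finite-order elements via the fact that their parabolic closure is finite, but this case distinction is inessential). For statement~(ii), however, the paper uses a one-line determinant argument that bypasses all structure theory: $\phi(w)$ is a product of reflections, so $\det\phi(w)=\pm1$; since the spectral radius is $1$, every eigenvalue has modulus at most $1$, and the product of the eigenvalues having modulus $1$ then forces \emph{every} eigenvalue to have modulus exactly $1$; after that, either there are two distinct eigenvalues (dominance fails) or a single eigenvalue of multiplicity $|S|>1$ (simplicity fails). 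You reach the same intermediate fact---all eigenvalues lie on the unit circle---by conjugating the parabolic closure to a standard parabolic $W_J$, decomposing it into irreducible components, excluding indefinite components via Krammer's theorem, and analyzing the finite and affine components through the filtration $0\subseteq R\subseteq V_{J_j}$ by the radical. That analysis is sound: the radical is indeed fixed pointwise, since $\mathcal{B}(v,\alpha_s)=0$ for $v\in R$ and $s\in J_j$, and the induced map on $V_{J_j}/R$ is an isometry of a positive definite form; likewise, an indefinite irreducible component is automatically infinite and nonaffine (positive definiteness of the restricted form would force finiteness, and any $m_{ij}=\infty$ rules it out). But your route makes (ii) logically dependent on Krammer's theorem and on several standard-but-nontrivial facts you flag yourself, notably that parabolic subgroups of a direct product are products of parabolics of the factors (needed so that minimality makes each $w_j$ essential in $W_{J_j}$). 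In short: the paper's determinant trick buys brevity, elementarity, and independence of (ii) from (i); your decomposition buys extra structural information---when the spectral radius is $1$, every irreducible component of the parabolic closure is of finite or affine type---at the cost of more machinery and more bookkeeping to verify.
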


\begin{proof}
We prove that the matrix $\phi(w)$ does not satisfy condition (i).
If the element $w$ is of finite order, its parabolic closure is finite,
see~\cite[Proposition~3.2.1]{krammer_conjugacy_2009} and (i) is not
satisfied in this case. Assume $w$ has infinite order.
By contradiction, assume that the parabolic closure of $\phi(w)$ is infinite,
irreducible and nonaffine. Then, by Theorem~\ref{thm:krammer}, the spectral radius 
is strictly greater than 1; a contradiction. Thus, the parabolic closure 
of $w$ has to be reducible or affine.

Now we prove that $\phi(w)$ does not satisfy condition (ii).
The determinant, i.e. the product of the eigenvalues, of $\phi(w)$ is either $1$ or $-1$. 
Then all eigenvalues should have modulus $1$ to have a product equal to
$1$ or $-1$. 
Therefore if an element has at least two different eigenvalues, the Dominance
property does not hold.
If there is only one eigenvalue and the dimension of the
representation is strictly greater than one, the algebraic multiplicity of 
the eigenvalue has to be strictly greater than 1.
\end{proof}

It remains to show that elements of spectral radius strictly greater than~1 also
satisfy Conjecture~\ref{conj:equivalence} which would imply
Conjecture~\ref{conj:charac_perron}.

\section*{Acknowledgements}

We are greatful for the seminars given by Mike Boyle \cite{boyle_perron_2013}
on Perron--Frobenius theory at the \emph{Summer School 2013, Number Theory and
Dynamics} at Institut Fourier, Grenoble, June 2013.
We wish to thank Christophe Reutenauer for providing us the references
\cite{MR637001} and \cite{MR1157864} and Christophe Hohlweg for inspiring
discussions. A special word of thanks goes to Mario Doyon who fostered our
interest to pursue a career in mathematics.

This work was initiated during a visit of Jean-Philippe at LIAFA (Paris)
supported by the Agence Nationale de la
Recherche through the project Dyna3S (ANR-13-BS02-0003).
During this work, Jean-Philippe was supported by a FQRNT postdoctoral
fellowship and a post-doctoral ISF grant (805/11)
and S\'ebastien was supported by a postdoctoral Marie Curie fellowship
(BeIPD-COFUND) cofunded by the European Commission.

%%%%%%%%%%%%%%%%
% Bibliographie %
%%%%%%%%%%%%%%%%%
%\bibliographystyle{plain} %numeros
\bibliographystyle{alpha} %author+year
%{\footnotesize
\bibliography{biblio}
%}

\end{document}